\newtheorem{theorem}{Theorem}[section] 
\newtheorem{corollary}{Corollary}[section] 
\newtheorem{lemma}{Lemma}[section]
\newtheorem{definition}{Definition}[section] 
\newtheorem{example}{Example}[section] 
\newtheorem{remark}{Remark}[section] 
\def\geq{\geqslant}\def\leq{\leqslant}
\def\lim{\limits}
\begin{document}
\title{\bf   Hyponormal block Toeplitz operators with non-harmonic symbols on the weighted Bergman space }
\author{Guangyang Fu, Jiang Zhou\thanks{Corresponding author. The research was supported by National Natural Science Foundation of China (12061069).} \\[.5cm]}

\date{}
\maketitle
{\bf Abstract:}\quad{
\hyphenpenalty=5000
\tolerance=1000
In this paper, we discuss hyponormal block Toeplitz operators $T_{\Phi}$ over the vector-valued weighted Bergman space $A_\alpha^2\left(\mathbb{C}^n\right)$. And two conditions about hyponormal block Toeplitz operators $T_{\Phi}$ on $A_\alpha^2\left(\mathbb{C}^n\right)$ were discussed separately, where $ \Phi(z)=A z^p \bar{z}^q + B z^s \bar{z}^t $, $A,B$ are any n-order complex square matrices. }\par
{\bf Key Words:} block Toeplitz operators; Hyponormal operators;
non-harmonic symbols; weighted Bergman space; commutators.

{\bf Mathematics Subject Classification(2020):}  47B20; 47B35; 42B35.

\baselineskip 15pt

\section{Introduction}
In 1970, Halmos \cite{HA1970}, in his research on Hilbert spaces, proposed: Is every subnormal Toeplitz operator $T_{\varphi}$ either analytic or normal? In 1975, Amemiya \cite{AI1975} obtained that Halmos' conjecture holds for quasinormal Toeplitz operators $T_{\varphi}$. In 1976, Abrahamse \cite{AM1976} showed that if $\varphi$ is a function of bounded type, Halmos' conjecture is true. It was not until 1984 that Cowen and Long \cite{CL1984} responded negatively to Halmos' conjecture. 

Since hyponormal Toeplitz operators $T_{\varphi}$ play an important role in solving Halmos's conjecture,
In 1988, Cowen \cite{CC1988} provided a characterization for the hyponormality of $T_{\varphi}$ on the Hardy space utilizing the dilation theorem of Sarason \cite{SD1967}. Subsequently, much work has been conducted by connecting the hyponormality of $T_{\varphi}$ to function space theory and interpolation problems, for example, \cite{HI1999,Zhu1995,NTTK}.

In 2006, Gu \cite{GC2006} characterized block hyponormal Toeplitz operators $T_{\Phi}$ on the vector-valued Hardy space utilizing Nagy-Foias's general commutant lifting theorem \cite{FF1968,FF1993}. Additional relevant research about $T_{\Phi}$ includes \cite{CR2012,HI2011}.

We would discuss hyponormal block Toeplitz operators $T_{\Phi}$ on the vector-valued weighted Bergman space, where $ \Phi(z)=A z^p \bar{z}^q + B z^s \bar{z}^t $, $A,B$ are any n-order complex square matrices. In 2019, Fleeman and Liaw \cite{FM2019} presented sufficient conditions for the hyponormality of $T_{az^m \bar z^n+bz^s \bar z^t}$ on the Bergman space. In 2021, Le and Simanek \cite{BS2021} gave an equivalence condition for the hyponormality of $T_{z^n+C|z|^s}$ on the weighted Bergman space, and then Kim and Lee \cite{KL2021,KL2023} further considered two conditions for the hyponormality of $T_{az^m \bar z^n+bz^s \bar z^t}$ on the Bergman space and extended the outcomes to the weighted Bergman space. For Bergman spaces, in fact, the lack of a commutant lifting theorem makes it hard to determine hyponormal (block) Toeplitz operators.

In 2019, Lee \cite{LJ2019} considered hyponormal block Toeplitz operators $T_{\Phi}$ on the vector-valued weighted Bergman space, where $ \Phi$ are matrix-valued polynomial symbols with circulants as coefficients. We will also consider the hyponormality of $ T_{\Phi}$, where $ \Phi(z)=A z^p \bar{z}^q + B z^s \bar{z}^t $, $A,B$ are circulants.

Below is the structure of our paper: in Section 2, we will provide the fundamental definitions and lemmas. In Section 3, we will give two conditions for block Toeplitz operators $ T_{\Phi}$ to be hyponormal on the vector-valued weighted Bergman space, where $ \Phi(z) = A z^p \bar{z}^q + B z^s \bar{z}^t $, $A,B $ are any n-order complex square matrices. In Section 4, we will consider two conditions for the hyponormality of $ T_{\Phi}$, where matrices $A,B$ in $\Phi$ are circulants.

Throughout the paper, the following notations are used: Let $ \mathbb{D}$ be the open unit disk. Write $M_{n}$ be the set of all $n \times n$ complex matrices. Let $M_n^*$ denote the set of matrices obtained by taking the conjugate of each element of $M_n$. Let $\mathbb{O}$ denote the null matrix and $I_n$ denote the identity matrix. Let $L^{\infty}\left(M_n\right)$ denote the space of $M_n$-valued essentially bounded Lebesgue measurable functions on $ \mathbb{D} $. Let $H(\mathbb{D})$ denote the space of analytic functions in $ \mathbb{D} $. For $A \in M_n$, let $A \geq 0$ denote that $A$ is a Hermite positive semidefinite matrix. 

\section{\hspace{-0.6cm}{\bf }~~Some Preliminaries and Notations\label{s2}}
  \begin{definition} \cite{HA1970}
	For a bounded linear operator $\mathcal{T}$ on a Hilbert space, $\mathcal{T}$ is hyponormal if its self-commutator $\left[\mathcal{T}^*, \mathcal{T}\right]:=\mathcal{T}^* \mathcal{T}-\mathcal{T} \mathcal{T}^* \geq 0$, where operator $\mathcal{T}^*$ denotes the adjoint of operator $\mathcal{T}$. 
\end{definition} 

  Theories of Bergman spaces can go back to Bergman's work \cite{BS1950}.
\begin{definition} \cite{BS1950}
	Given $-1<\alpha<\infty$, the weighted Bergman space $A_\alpha^2(\mathbb{D})$ is defined by
	$$ 
	A_\alpha^2(\mathbb{D})= H(\mathbb{D}) \cap L^{ 2 }(\mathbb{D}, dA_{ \alpha }),
	$$
	where $d A_\alpha(z)=(\alpha+1)\left(1-|z|^2\right)^\alpha d A(z)$, $ dA$ is the Euclidean area measure on the complex plane.
\end{definition}


Let $L_\alpha^2\left(\mathbb{C}^n\right)$ is a Hilbert space of $\mathbb{C}^n$-valued Lebesgue square integrable functions with the inner product
$$
\langle f, g\rangle:=\int_{\mathbb{D}} \langle f(z), g(z)\rangle_{\mathbb{C}^n} d A_\alpha(z), \quad f, g \in L_\alpha^2\left(\mathbb{C}^n\right),
$$
where $\langle,\rangle_{\mathbb{C}^n} \text { is the standard inner product on } \mathbb{C}^n \text {. }$

\begin{definition} \cite{BS1950}
	Given $-1<\alpha<\infty$, the vector-valued weighted Bergman space
	$A_\alpha^2\left(\mathbb{C}^n\right)$ is defined by
	$$ 
	A_\alpha^2\left(\mathbb{C}^n\right)= H(\mathbb{D}^n) \cap L_\alpha^2\left(\mathbb{C}^n\right).
	$$
\end{definition}


Theories of Toeplitz operators can go back to Toeplitz, Halmos, and Brown's work \cite{HB1964,TO1911}. For further details on Bergman spaces and Toeplitz operators, see \cite{Zhu2007,HZ2000}.
\begin{definition} \cite{HB1964,TO1911}
	Given the matrix-valued function $\Phi \in L^{\infty}\left(M_n\right)$, the block Toeplitz operator $T_{\Phi}$ with symbol $\Phi$ on the vector-valued weighted Bergman space $A_\alpha^2\left(\mathbb{C}^n\right)$ is defined as follows:
	$$
	T_{\Phi} f:=P(\Phi f), \quad  f \in A_\alpha^2\left(\mathbb{C}^n\right),
	$$
	where the orthogonal projection that maps $L_\alpha^2\left(\mathbb{C}^n\right)$ onto $A_\alpha^2\left(\mathbb{C}^n\right)$ is represented by $P$. 
\end{definition}
\begin{remark}
	It is easy to verify that $T_{\Phi_1+\Phi_2} =T_{\Phi_1}+T_{\Phi_2}$, $T_{\Phi_1}^* =T_{\Phi_1^*}$, where $\Phi_1 , \Phi_2 \in L^{\infty}\left(M_n\right) $.
\end{remark}

We will utilize the following notations for our convenience.\\
Given $c_i \in \mathbb{C}~(i = 1,2 \dots n)$, the circulant is a matrix, which has the following form:
$$
\operatorname{cir}\left[c_1, c_2, \cdots, c_{n}\right] :=
\left(\begin{array}{ccccc}
	c_1 & c_2 & \cdots & \cdots & c_{n} \\
	c_{n} & c_1 & \ddots & & \vdots \\
	\vdots & \ddots & \ddots & \ddots & \vdots \\
	c_2 & \cdots & \cdots & c_{n} & c_1
\end{array}\right).
$$
Given $j \in \mathbb{N}_+\cup {\left\{0\right\}}$, $c_{i}^{(l)} \in \mathbb{C}$ for $l=1,2, \cdots, n$, write
$$
K_j(z)=\sum_{i=0}^{\infty} z^{j+i} \left(
c_{i}^{(1)}, c_{i}^{(2)}, \dots, c_{i}^{(n)}
\right)^{T}.
$$
Given $p,q \in \mathbb{N}_+\cup {\left\{0\right\}}$, write
$$
\Lambda_\alpha(p)=\frac{\Gamma(p+1) \Gamma(\alpha+2)}{\Gamma(p+\alpha+2)}, \quad \Lambda_\alpha(p, q)=\frac{\Gamma(p+1)^2 \Gamma(p-q+\alpha+2) \Gamma(\alpha+2)}{\Gamma(p+\alpha+2)^2 \Gamma(p-q+1)}.
$$

Next, we review the properties of projections and norms on $A_\alpha^2(\mathbb{D})$. 

\begin{lemma} \cite{BS1950}
	Given $p, q \in \mathbb{N}_+\cup {\left\{0\right\}}$, then
	\begin{align*}
		\left\langle z^q, z^p\right\rangle= \begin{cases}\Lambda_\alpha(p) & \text { if } p=q \\ 0 & \text { if } p\neq q \end{cases}\quad
		P\left(\bar{z}^q z^p\right)= \begin{cases} \displaystyle\frac{\Lambda_\alpha(p, q)}{\Lambda_\alpha(p)} z^{p-q}& \text { if } p \geq q \\ 0 & \text { if } p < q\end{cases}.
	\end{align*}
\end{lemma}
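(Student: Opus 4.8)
The plan is to reduce both identities to elementary polar-coordinate computations, exploiting the rotational invariance of the weight $dA_\alpha$. Writing $z=re^{i\theta}$ with $0\le r<1$ and $0\le\theta<2\pi$, the measure factors as $dA_\alpha(z)=\frac{\alpha+1}{\pi}(1-r^2)^\alpha\,r\,dr\,d\theta$, the constant being fixed by the normalization $\Lambda_\alpha(0)=1$. Thus every monomial pairing splits as an angular integral times a radial integral, and the whole lemma becomes bookkeeping with the Gamma function.

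First I would treat the inner product. In $\langle z^q,z^p\rangle=\int_{\mathbb{D}}z^q\bar z^p\,dA_\alpha$ the integrand carries the angular factor $e^{i(q-p)\theta}$, whose integral over $[0,2\pi)$ vanishes unless $p=q$; this gives the orthogonality and, as a by-product, shows that $\{z^k\}_{k\ge0}$ is an orthogonal system in $A_\alpha^2(\mathbb{D})$. When $p=q$ the substitution $u=r^2$ turns the radial part into a Beta integral $\int_0^1 u^p(1-u)^\alpha\,du=B(p+1,\alpha+1)$, and combining this with the prefactor $\alpha+1$ and the identity $(\alpha+1)\Gamma(\alpha+1)=\Gamma(\alpha+2)$ produces precisely $\|z^p\|^2=\Lambda_\alpha(p)$.

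Next I would compute the projection by expanding in this orthogonal basis. Since $P$ projects onto the closed linear span of $\{z^k\}$, one has $P(\bar z^q z^p)=\sum_{k\ge0}\frac{\langle\bar z^q z^p,z^k\rangle}{\Lambda_\alpha(k)}z^k$. The pairing $\langle\bar z^q z^p,z^k\rangle=\int_{\mathbb{D}}z^p\bar z^{\,q+k}\,dA_\alpha$ has angular factor $e^{i(p-q-k)\theta}$, hence survives only for the single index $k=p-q$, which is a legitimate nonnegative exponent exactly when $p\ge q$; if $p<q$ every coefficient vanishes and $P(\bar z^q z^p)=0$. For $p\ge q$ the surviving pairing is $\int_{\mathbb{D}}|z|^{2p}\,dA_\alpha=\Lambda_\alpha(p)$, so that $P(\bar z^q z^p)=\frac{\Lambda_\alpha(p)}{\Lambda_\alpha(p-q)}z^{p-q}$.

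The remaining step --- the one I would flag as the main (though purely computational) obstacle --- is to reconcile this natural coefficient with the stated form $\Lambda_\alpha(p,q)/\Lambda_\alpha(p)$. I would verify the Gamma-function identity $\frac{\Lambda_\alpha(p)}{\Lambda_\alpha(p-q)}=\frac{\Lambda_\alpha(p,q)}{\Lambda_\alpha(p)}$ by direct expansion: after cancelling the factors $\Gamma(\alpha+2)$ and one power of $\Gamma(p+\alpha+2)$ hidden in the definition of $\Lambda_\alpha(p,q)$, both sides collapse to $\frac{\Gamma(p+1)\Gamma(p-q+\alpha+2)}{\Gamma(p+\alpha+2)\Gamma(p-q+1)}$. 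This matches the two expressions and completes the argument.
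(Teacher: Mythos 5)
Your proof is correct, and since the paper simply cites this lemma to Bergman's book without giving a proof, your polar-coordinate/Beta-integral computation is exactly the standard argument one would supply; the final Gamma-function identity $\Lambda_\alpha(p)/\Lambda_\alpha(p-q)=\Lambda_\alpha(p,q)/\Lambda_\alpha(p)$ checks out. No gaps.
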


$\Lambda_\alpha(p)$ and $\Lambda_\alpha(p,q)$ have the following relation:

\begin{lemma} \cite{KL2023}
	Let $p, q, s, t \in \mathbb{N}_+\cup {\left\{0\right\}}$.
	
	(i) If $i \geq p-q$, then $\Lambda_\alpha(p+i, q) \geq \Lambda_\alpha(q+i, p)$. 
	
	(ii) If $i \geq p-q=s-t>0$, then
	$$
	\frac{\Lambda_\alpha(p+i) \Lambda_\alpha(s+i)}{\Lambda_\alpha(p-q+i)} \geq \frac{\Lambda_\alpha(q+i) \Lambda_\alpha(t+i)}{\Lambda_\alpha(q-p+i)} .
	$$
	
	(iii) Given $0 \leq i<p-q=t-s$, if $t \geq p$, then $\displaystyle\frac{\Lambda_\alpha(t+i, s)}{\Lambda_\alpha(p+i, q)}$ is increasing in $i$, and $t<p$, then $\displaystyle\frac{\Lambda_\alpha(t+i, s)}{\Lambda_\alpha(p+i, q)}$ is decreasing in $i$.
\end{lemma}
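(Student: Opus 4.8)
The plan is to reduce all three statements to two elementary facts about the single-argument quantity $\Lambda_\alpha$. The first is the algebraic identity
$$\Lambda_\alpha(m,k)=\frac{\Lambda_\alpha(m)^2}{\Lambda_\alpha(m-k)}\qquad(m\ge k\ge 0),$$
which drops out of the definitions after cancelling Gamma factors (the factor $\Gamma(\alpha+2)/\Gamma(m+\alpha+2)$ appears squared, while $\Gamma(m-k+\alpha+2)/\Gamma(m-k+1)$ reassembles as $\Lambda_\alpha(m-k)^{-1}$). The second, which is the analytic core, is the one-step ratio
$$\frac{\Lambda_\alpha(m+1)}{\Lambda_\alpha(m)}=\frac{m+1}{m+\alpha+2},$$
whose derivative in $m$ equals $(\alpha+1)/(m+\alpha+2)^2>0$, so it is strictly increasing since $\alpha>-1$. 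Telescoping then shows that for each fixed integer $k\ge 0$ the shift ratio
$$S_k(m):=\frac{\Lambda_\alpha(m+k)}{\Lambda_\alpha(m)}=\prod_{j=0}^{k-1}\frac{m+j+1}{m+j+\alpha+2}$$
is a product of positive increasing factors, hence itself increasing in $m$. This monotonicity (a discrete log-convexity of $\Lambda_\alpha$) is the only property I expect to use in the estimates.

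I would prove (ii) first, since it is the engine. Writing $d:=p-q=s-t>0$, so that $p=q+d$ and $s=t+d$, the two denominators are $\Lambda_\alpha(d+i)$ and $\Lambda_\alpha(i-d)$, and the inequality rearranges to
$$S_d(q+i)\,S_d(t+i)\ \ge\ \frac{\Lambda_\alpha(d+i)}{\Lambda_\alpha(i-d)}=S_{2d}(i-d)=S_d(i-d)\,S_d(i),$$
where the last equality is the factorization $S_{2d}(m)=S_d(m)S_d(m+d)$ at $m=i-d$. The two base points on the left, $q+i$ and $t+i$, both exceed $i$, while the two on the right are $i$ and $i-d<i$; pairing the larger left point with $i$ and the smaller with $i-d$ and invoking monotonicity of $S_d$ gives the inequality factor by factor, and multiplying the two (all quantities positive) yields the claim. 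For (i) I would apply the identity to both sides: since $p+i-q=d+i$ and $q+i-p=i-d$, the claim becomes $S_d(q+i)^2\ge S_d(i)\,S_d(i-d)$, which is exactly the reduced form of (ii) in the special case $t=q$ (the case $p=q$ being a trivial equality).

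Part (iii) is the cleanest. Because $t-s=p-q=d$, one has $t+i-s=p+i-q=d+i$, so after applying the identity the two denominators $\Lambda_\alpha(d+i)$ cancel and
$$\frac{\Lambda_\alpha(t+i,s)}{\Lambda_\alpha(p+i,q)}=\left(\frac{\Lambda_\alpha(t+i)}{\Lambda_\alpha(p+i)}\right)^2.$$
If $t\ge p$ the inner ratio equals $S_{t-p}(p+i)$, which increases in $i$; if $t<p$ it equals $1/S_{p-t}(t+i)$, which decreases in $i$. Squaring preserves monotonicity, giving precisely the two stated cases.

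The only genuinely non-routine step is the bookkeeping in (ii): one must spot the factorization $S_{2d}(i-d)=S_d(i-d)S_d(i)$ and then choose the correct pairing of base points so that the monotonicity of $S_d$ can be applied one factor at a time. Everything else is Gamma-function algebra together with the elementary observation that $(m+1)/(m+\alpha+2)$ increases in $m$, so I anticipate no real analytic difficulty beyond verifying that ratio and organizing the pairing.
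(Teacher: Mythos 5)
Your proof is correct, and it is worth noting at the outset that the paper itself offers nothing to compare it with: Lemma 2.2 is quoted from \cite{KL2023} without proof, so your argument supplies a justification the paper omits. The two reductions you rely on both check out. The identity $\Lambda_\alpha(m,k)=\Lambda_\alpha(m)^2/\Lambda_\alpha(m-k)$ is immediate from the definitions, and $\Lambda_\alpha(m+1)/\Lambda_\alpha(m)=(m+1)/(m+\alpha+2)$ is increasing in $m$ precisely because $\alpha>-1$, so each shift ratio $S_k(m)=\prod_{j=0}^{k-1}(m+j+1)/(m+j+\alpha+2)$ is a product of positive increasing factors and hence increasing. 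For (ii), the rearrangement to $S_d(q+i)S_d(t+i)\ge S_d(i-d)S_d(i)$ is legitimate (all quantities are positive once $i\ge d$, so $i-d\ge 0$), and since $q+i\ge i$ and $t+i\ge i>i-d$, both left-hand base points dominate both right-hand ones; the careful ``pairing'' you describe is therefore not even needed, though it does no harm. Part (i) is indeed the case $s=p$, $t=q$ of the reduced inequality, and part (iii) works exactly as you say: because $t+i-s=p+i-q=d+i$, the common denominator $\Lambda_\alpha(d+i)$ cancels and the ratio equals $\bigl(\Lambda_\alpha(t+i)/\Lambda_\alpha(p+i)\bigr)^2$, whose monotonicity in $i$ follows from that of $S_{|t-p|}$. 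The one caveat worth recording concerns (i): your argument, like the statement itself, tacitly assumes $p\ge q$ (you use $q+i\ge i\ge i-d$ with $d=p-q\ge 0$); if $p<q$ and $0\le i<q-p$ the left side $\Lambda_\alpha(p+i,q)$ degenerates to $0$ while the right side is positive, so the lemma must be read with the convention that it is only invoked for $p\ge q$, which is how the paper actually uses it.
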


\section{\hspace{-0.6cm}{\bf }~~Conditions for hyponormal block Toeplitz opeartors\label{s3}}

In this section, given $ p,q,s,t \in \mathbb{N}_+\cup {\left\{0\right\}}$, we consider two conditions for the hyponormality of $T_{\Phi}$ on $A_\alpha^2\left(\mathbb{C}^n\right)$, where $ \Phi(z)=A z^p \bar{z}^q + B z^s \bar{z}^t $, $ A,B \in M_n$. 

The following lemma on $A_\alpha^2\left(\mathbb{C}^n\right)$ from a direct calculation by Lemma 2.1.
\begin{lemma}
	Given $p,q \in \mathbb{N}_+\cup {\left\{0\right\}}$, $a_{ij} \in \mathbb{C}$ and $A = (a_{ij})_{n \times n}\in M_n$, then
	\begin{align*}
		&(i) \left\|A \bar{z}^q K_p(z)\right\|^2=\sum_{i=0}^{\infty} \Lambda_\alpha(p+i+q) \sum_{k=1}^n \left| \sum_{l=1}^n a_{k l} c_i^{(l)}\right|^2.\\
		&(ii) \left\|P \left(A \bar{z}^q K_p(z)\right)\right\|^2= \begin{cases}\sum_{i=0}^{\infty} \Lambda_\alpha(p+i,q) \sum_{k=1}^n \left| \sum_{l=1}^n a_{k l} c_i^{(l)}\right|^2 & \text { if } p \geq q \\ \sum_{i=q-p}^{\infty} \Lambda_\alpha(p+i,q) \sum_{k=1}^n \left| \sum_{l=1}^n a_{k l} c_i^{(l)}\right|^2 & \text { if } p<q \end{cases}.
	\end{align*}
\end{lemma}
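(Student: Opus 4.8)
The plan is to reduce both identities to scalar computations in $A_\alpha^2(\mathbb{D})$ carried out monomial by monomial, relying entirely on the two formulas in Lemma 2.1. First I would expand the matrix action: writing $d_i^{(k)} := \sum_{l=1}^n a_{kl} c_i^{(l)}$ for the $k$-th entry of the vector $A$ applied to the $i$-th coefficient, the $k$-th component of $A\bar z^q K_p(z)$ becomes the single scalar series $\sum_{i=0}^\infty d_i^{(k)}\,\bar z^q z^{p+i}$. Since the inner product on $L_\alpha^2(\mathbb{C}^n)$ splits as the sum of the $\mathbb{C}$-valued inner products of the components, it suffices to evaluate $\int_{\mathbb{D}}\bigl|\sum_i d_i^{(k)}\bar z^q z^{p+i}\bigr|^2\,dA_\alpha$ for each $k$ and then sum over $k$.

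For part (i), I would expand the squared modulus into a double series and invoke the orthogonality relation $\langle z^a, z^b\rangle=\Lambda_\alpha(a)\,\delta_{ab}$ from Lemma 2.1. After conjugation the $(i,j)$ cross term carries the monomial $z^{p+i+q}\bar z^{p+j+q}$, whose integral vanishes unless $i=j$; the surviving diagonal terms contribute $\sum_i |d_i^{(k)}|^2\,\Lambda_\alpha(p+i+q)$. Summing over $k$ and unfolding the definition of $d_i^{(k)}$ yields the claimed expression.

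For part (ii), I would first apply the projection formula $P(\bar z^q z^{p+i})=\frac{\Lambda_\alpha(p+i,q)}{\Lambda_\alpha(p+i)}z^{p+i-q}$ when $p+i\ge q$, and $0$ otherwise, to each term of the series. This is precisely what forces the case split: when $p\ge q$ every index $i\ge 0$ survives the projection, whereas when $p<q$ only the indices $i\ge q-p$ survive, which accounts for the two ranges of summation. Applying the orthogonality relation once more to the projected series leaves the diagonal sum $\sum_i |d_i^{(k)}|^2\,\frac{\Lambda_\alpha(p+i,q)^2}{\Lambda_\alpha(p+i)^2}\,\Lambda_\alpha(p+i-q)$.

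The main (though routine) obstacle is then the algebraic identity
$$
\frac{\Lambda_\alpha(m,q)^2}{\Lambda_\alpha(m)^2}\,\Lambda_\alpha(m-q)=\Lambda_\alpha(m,q),\qquad m=p+i,
$$
which I would verify directly from the Gamma-function definitions: the ratio $\Lambda_\alpha(m,q)/\Lambda_\alpha(m)$ collapses to $\frac{\Gamma(m+1)\Gamma(m-q+\alpha+2)}{\Gamma(m+\alpha+2)\Gamma(m-q+1)}$, and multiplying its square by $\Lambda_\alpha(m-q)$ cancels exactly back to $\Lambda_\alpha(m,q)$. Once this simplification is in hand, each summand reduces to $\Lambda_\alpha(p+i,q)\,|d_i^{(k)}|^2$, and summing over $k$ produces the stated formula in both the $p\ge q$ and $p<q$ cases.
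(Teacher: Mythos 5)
Your proposal is correct and is exactly the ``direct calculation by Lemma 2.1'' that the paper invokes (the paper itself omits the details): componentwise expansion, orthogonality of monomials for (i), and the projection formula plus the Gamma-function identity $\Lambda_\alpha(m,q)^2\Lambda_\alpha(m-q)/\Lambda_\alpha(m)^2=\Lambda_\alpha(m,q)$ for (ii), which does check out against the stated definitions. The case split at $i\ge q-p$ is handled correctly.
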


First, given $ p,q \in \mathbb{N}_+\cup {\left\{0\right\}}$, we consider the case when $\Phi(z)=A z^p \bar{z}^q $, $ A = (a_{ij})_{n \times n} \in M_n $.
\begin{theorem}
	Given $p,q \in \mathbb{N}_+\cup {\left\{0\right\}}$, let $\Phi(z)=A z^p \bar{z}^q$, $\mathbb{O} \neq A = (a_{ij})_{n \times n} \in M_n $. If for all $k = 1, 2, \dots, n$, 
	\begin{equation}
		\left(\begin{matrix}
			a_{k1}\\
			a_{k2}\\
			\dots\\
			a_{kn}
		\end{matrix}\right)(\bar{a}_{k1},\bar{a}_{k2},\dots,\bar{a}_{kn}) - \left(\begin{matrix}
			\bar{a}_{k1}\\
			\bar{a}_{k2}\\
			\dots\\
			\bar{a}_{kn}
		\end{matrix}\right)(a_{k1},a_{k2},\dots,a_{kn}) \geq 0,
	\end{equation}
	then $T_{\Phi}$ is hyponormal if and only if $p \geq q$.
\end{theorem}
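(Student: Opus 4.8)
The plan is to turn the operator positivity $[T_\Phi^*,T_\Phi]\ge 0$ into a comparison of two explicit nonnegative series and then argue term by term. By Definition 2.1 and the Remark, $T_\Phi$ is hyponormal precisely when $\|T_\Phi f\|^2-\|T_\Phi^* f\|^2=\langle[T_\Phi^*,T_\Phi]f,f\rangle\ge 0$ for every $f\in A_\alpha^2(\mathbb{C}^n)$, and since $\Phi(z)=Az^p\bar z^q$ one has $\Phi^*(z)=A^*z^q\bar z^p$. First I would expand an arbitrary $f(z)=\sum_{i\ge 0}z^i v_i$ with $v_i=(c_i^{(1)},\dots,c_i^{(n)})^T$, so that $z^p f=K_p$ and $z^q f=K_q$ carry the same coefficient vectors; hence $T_\Phi f=P(A\bar z^q K_p)$ and $T_\Phi^* f=P(A^*\bar z^p K_q)$. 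Feeding these into Lemma 3.1(ii), once with $A$ and once with $A^*$ (the roles of $p$ and $q$ interchanged in the second), produces two series whose $i$-th terms carry the weights $\Lambda_\alpha(p+i,q)$ and $\Lambda_\alpha(q+i,p)$ and the matrix factors $\sum_k|\sum_l a_{kl}c_i^{(l)}|^2=\|Av_i\|^2$ and $\sum_k|\sum_l\overline{a_{lk}}\,c_i^{(l)}|^2=\|A^*v_i\|^2$.

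For the sufficiency I would assume $p\ge q$ and show the difference of the two series is nonnegative. When $p>q$ the $T_\Phi$-series runs from $i=0$ while the $T_\Phi^*$-series runs from $i=p-q$, so the indices $0\le i<p-q$ occur only in $\|T_\Phi f\|^2$ and contribute nonnegatively; they may be discarded. For $i\ge p-q$ it then suffices to establish the termwise bound $\Lambda_\alpha(p+i,q)\|Av_i\|^2\ge\Lambda_\alpha(q+i,p)\|A^*v_i\|^2$, and here the weight comparison $\Lambda_\alpha(p+i,q)\ge\Lambda_\alpha(q+i,p)>0$ is exactly Lemma 2.2(i). The case $p=q$ is the extreme one in which the two weights coincide and the inequality collapses to $\|Av_i\|^2\ge\|A^*v_i\|^2$ for every coefficient vector. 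This reduction isolates the role of hypothesis (3.1): writing $r_k=(a_{k1},\dots,a_{kn})^T$ for the $k$-th row, (3.1) states $r_kr_k^*-\overline{r_k}\,\overline{r_k}^*\ge 0$ for each $k$, and I would use this family of rank-one inequalities to produce the matrix-level domination feeding the termwise estimate, after which Lemma 2.2(i) upgrades it to the weighted form for all $i\ge p-q$.

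For the necessity I would argue by contraposition: if $p<q$, pick $v_0$ with $A^*v_0\ne\mathbb{O}$ (possible because $A\ne\mathbb{O}$) and test on the constant function $f\equiv v_0$. Since $q-p>0$, Lemma 3.1(ii) forces $\|T_\Phi f\|^2=0$, whereas the $i=0$ term alone gives $\|T_\Phi^* f\|^2\ge\Lambda_\alpha(q,p)\|A^*v_0\|^2>0$; hence $[T_\Phi^*,T_\Phi]$ is not positive and $T_\Phi$ is not hyponormal, and this half needs no structural hypothesis on $A$. The step I expect to be the main obstacle is exactly the matrix comparison inside the sufficiency: the adjoint series naturally carries the column-indexed quantity $\sum_k|\sum_l\overline{a_{lk}}\,c_i^{(l)}|^2=\|A^*v_i\|^2$, while (3.1) is stated row-by-row, so the delicate point is to convert the per-row content of (3.1) into the domination $\|Av_i\|^2\ge\|A^*v_i\|^2$ (equivalently $A^*A\ge AA^*$) and only then invoke Lemma 2.2(i). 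Carrying out that conversion cleanly for an arbitrary $A\in M_n$ is the crux on which the whole "if" direction rests.
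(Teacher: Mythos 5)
Your skeleton matches the paper's: reduce hyponormality to $\|T_\Phi f\|^2-\|T_\Phi^*f\|^2\ge 0$ for a general $f=K_0(z)$, expand both norms with Lemma 3.1(ii), discard the nonnegative head terms $0\le i<p-q$, and compare the remaining terms index by index via Lemma 2.2(i); your necessity argument (testing on a constant vector, which annihilates $T_\Phi f$ when $p<q$) is complete and in fact more explicit than the paper's symmetry argument through $[T_\Phi^*,T_\Phi]=-[T_{\Phi^*}^*,T_{\Phi^*}]$. But the sufficiency direction is not a proof: the single step where hypothesis (1) must enter --- the termwise matrix domination --- is exactly the step you leave undone, announcing it as ``the crux.'' That is a genuine gap, not a routine verification.

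Moreover, the reduction you propose for that step would fail. You want to deduce $\|Av\|^2\ge\|A^*v\|^2$ for all $v$, i.e. $A^*A\ge AA^*$; for a square matrix this difference has trace zero, so it forces $A$ to be normal, and hypothesis (1) does not imply normality: with $r_k=(a_{k1},\dots,a_{kn})^T$, condition (1) reads $r_kr_k^*-\bar r_k\bar r_k^*\ge 0$, a difference of two rank-one positive semidefinite matrices of equal trace, hence it actually forces $r_kr_k^*=\bar r_k\bar r_k^*$; every real matrix satisfies it, including non-normal ones such as $\left(\begin{smallmatrix}1&1\\0&1\end{smallmatrix}\right)$. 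What (1) does control, via $v^*(r_kr_k^*)v=\bigl|\sum_l\bar a_{kl}c^{(l)}\bigr|^2$ and $v^*(\bar r_k\bar r_k^*)v=\bigl|\sum_l a_{kl}c^{(l)}\bigr|^2$, is a row-by-row comparison of $(Av)_k$ with $(\bar Av)_k$ (entrywise conjugate, no transpose), and that is precisely the quantity the paper's expansion of the adjoint term carries: its second series is written with $\sum_k\bigl|\sum_l\bar a_{kl}c_i^{(l)}\bigr|^2$ rather than the column-indexed sums $\sum_k\bigl|\sum_l\bar a_{lk}c_i^{(l)}\bigr|^2=\|A^*v_i\|^2$ that you (correctly, for the genuine Hilbert-space adjoint) single out. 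So the conversion you defer --- from the row-wise content of (1) to $A^*A\ge AA^*$ --- does not exist, and the argument as you have set it up cannot be completed along the route you describe.
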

\begin{proof}
	We only have to prove:
	\begin{equation}
		\left\langle\left(T_{\Phi}^* T_{\Phi}-T_{\Phi} T_{\Phi}^*\right) K_0(z), K_0(z) \right\rangle \geq 0.
	\end{equation}
	If $p \geq q$, by Lemma 3.1, $T_{\Phi}$ is hyponormal, which is equivalent to
	\begin{equation}
		\begin{aligned}
			&\left\| P\left(A \bar{z}^q K_p(z)\right) \right\|^2- \left\| P\left(A^* \bar{z}^p K_q(z)\right) \right\|^2\\
			=&\sum_{i=0}^{\infty} \Lambda_\alpha(p+i,q) \sum_{k=1}^n \left| \sum_{l=1}^n a_{k l} c_i^{(l)}\right|^2 - \sum_{i=p-q}^{\infty} \Lambda_\alpha(q+i,p) \sum_{k=1}^n \left| \sum_{l=1}^n \bar{a}_{k l} c_i^{(l)}\right|^2 \geq 0.
		\end{aligned}
	\end{equation}
	Since (1) holds for $k = 1, 2, \dots, n$, then $\left| \sum_{l=1}^n a_{k l} c_i^{(l)}\right|^2 - \left| \sum_{l=1}^n \bar{a}_{k l} c_i^{(l)}\right|^2 \geq 0$ for $k = 1, 2, \dots, n$. Using Lemma 2.2 (i), then (3) holds, hence $T_{\Phi}$ is hyponormal.
	
	Similarly, if $p < q$ and (1) holds for $k = 1, 2, \dots, n$, then $T_{\Phi^*}$ is hyponormal. We conclude from $ T_{\Phi}^* T_{\Phi}-T_{\Phi} T_{\Phi}^* = -\left(T_{\Phi^*}^* T_{\Phi^*}-T_{\Phi^*} T_{\Phi^*}^*\right)$ that $T_{\Phi}$ is never hyponormal, which completes the proof.
\end{proof}
\begin{remark}
	If $A = \mathbb{O}$, then $\Phi \equiv \mathbb{O}$, hence $T_{\Phi}$ is always hyponormal, where $\Phi(z)=A z^p \bar{z}^q $.
\end{remark}
\begin{corollary}
	Given $p,q \in \mathbb{N}_+\cup {\left\{0\right\}}$, if $\Phi(z)=A z^p \bar{z}^q$, $ \mathbb{O} \neq A = (a_{ij})_{n \times n} \in M_n $ satisfying $A^*=A$ (i.e. A is a real matrix), then $T_{\Phi}$ is hyponormal if and only if $p \geq q$.
\end{corollary}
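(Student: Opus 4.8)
The plan is to derive the corollary directly from Theorem 3.1: since the conclusion ``$T_{\Phi}$ is hyponormal if and only if $p\geq q$'' is exactly the conclusion of that theorem, the entire task reduces to verifying that the structural hypothesis (1) is automatically satisfied when $A^{*}=A$. As the parenthetical remark records, $A^{*}=A$ here is to be read as the statement that $A$ is a real matrix, i.e. $a_{kl}=\bar a_{kl}$ for every $k,l$.

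First I would fix an index $k\in\{1,\dots,n\}$ and abbreviate the $k$-th row of $A$ by the column vector $v_{k}=(a_{k1},\dots,a_{kn})^{T}$, so that the left-hand side of (1) is precisely $v_{k}\,\overline{v_{k}}^{\,T}-\overline{v_{k}}\,v_{k}^{T}$. The key point is that realness of $A$ forces $\overline{v_{k}}=v_{k}$; consequently both rank-one outer products collapse to the same matrix $v_{k}v_{k}^{T}$, and their difference is the null matrix $\mathbb{O}$. Since $\mathbb{O}\geq 0$, inequality (1) holds for this $k$, and as $k$ was arbitrary it holds for all $k=1,\dots,n$. Equivalently, substituting $\bar a_{kl}=a_{kl}$ into the two modulus-squared terms of the norm identity used in (3) makes each summand $\bigl|\sum_{l}a_{kl}c_{i}^{(l)}\bigr|^{2}-\bigl|\sum_{l}\bar a_{kl}c_{i}^{(l)}\bigr|^{2}$ vanish, which is the same cancellation viewed at the level of the underlying quadratic form.

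With (1) confirmed and $A\neq\mathbb{O}$ by assumption, Theorem 3.1 applies verbatim and yields that $T_{\Phi}$ is hyponormal if and only if $p\geq q$, which is the assertion of the corollary. Here the ``$p\geq q$'' implication rests on Lemma 2.2 (i) through the nonnegativity displayed in (3), while the ``$p<q$'' implication uses the identity $T_{\Phi}^{*}T_{\Phi}-T_{\Phi}T_{\Phi}^{*}=-\left(T_{\Phi^{*}}^{*}T_{\Phi^{*}}-T_{\Phi^{*}}T_{\Phi^{*}}^{*}\right)$, exactly as in the proof of Theorem 3.1.

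In truth there is no analytic obstacle to overcome; the only point demanding care is conceptual rather than computational, namely recognizing that the hypothesis $A^{*}=A$ means ``$A$ real'' and that realness makes the skew part appearing in (1) vanish identically rather than merely be positive semidefinite. All of the genuine work has already been absorbed into Theorem 3.1, so once this reading is fixed the corollary is a one-line specialization.
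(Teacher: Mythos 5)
Your proof is correct and follows the same route as the paper, which simply states that the corollary is immediate from Theorem 3.1; you merely make explicit the (trivial) verification that realness of $A$ forces the difference of outer products in (1) to be the zero matrix, hence positive semidefinite. No further comment is needed.
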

\begin{proof}
	Immediate from Theorem 3.1.
\end{proof}
\begin{corollary}
	Given $p,q \in \mathbb{N}_+\cup {\left\{0\right\}}$, $p > q$, if $\Phi(z)=a I_n z^p \bar{z}^q+ b I_n z^q \bar{z}^p $, $a,b \in \mathbb{C}$, then $T_{\Phi}$ is hyponormal if and only if $a \geq b$.
\end{corollary}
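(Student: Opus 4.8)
The plan is to follow the method of Theorem 3.1 and test the self-commutator against a general element
\[
K_0(z)=\sum_{i=0}^{\infty}z^{i}\bigl(c_i^{(1)},\dots,c_i^{(n)}\bigr)^{T},
\]
so that, since the polynomials are dense in $A_\alpha^2(\mathbb{C}^n)$, hyponormality of $T_\Phi$ is equivalent to $\|T_\Phi K_0\|^2-\|T_\Phi^* K_0\|^2\ge 0$ for every choice of coefficients. Because $A=aI_n$ and $B=bI_n$ are scalar multiples of the identity, applying $P$ termwise and abbreviating $U:=P(\bar z^{q}K_p)$ and $V:=P(\bar z^{p}K_q)$ (with $K_p=z^pK_0$, $K_q=z^qK_0$) gives the compact expressions $T_\Phi K_0=aU+bV$ and, using $T_\Phi^*=T_{\Phi^*}$, $T_\Phi^* K_0=\bar bU+\bar aV$. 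The crucial feature of the scalar-matrix hypothesis is that the very same two vectors $U,V$ occur in both images, only with the coefficient roles $a\leftrightarrow\bar b$ interchanged.

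The key step is to expand the two norms and observe that the cross terms coincide and hence cancel:
\[
\|aU+bV\|^2=|a|^2\|U\|^2+|b|^2\|V\|^2+2\operatorname{Re}\bigl(a\bar b\langle U,V\rangle\bigr),
\]
\[
\|\bar bU+\bar aV\|^2=|b|^2\|U\|^2+|a|^2\|V\|^2+2\operatorname{Re}\bigl(a\bar b\langle U,V\rangle\bigr),
\]
so that
\[
\|T_\Phi K_0\|^2-\|T_\Phi^* K_0\|^2=\bigl(|a|^2-|b|^2\bigr)\bigl(\|U\|^2-\|V\|^2\bigr).
\]
This factorization collapses the two-term problem to a single scalar inequality; for general matrices $A,B$ the cross terms involve $A,B$ nontrivially and do not cancel, which is exactly why the scalar-matrix case reduces to a corollary.

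It remains to compare $\|U\|^2$ and $\|V\|^2$. Setting $\gamma_i:=\sum_{k=1}^{n}|c_i^{(k)}|^2$, Lemma 3.1(ii) gives $\|U\|^2=\sum_{i=0}^{\infty}\Lambda_\alpha(p+i,q)\gamma_i$ (valid since $p\ge q$) and $\|V\|^2=\sum_{i=p-q}^{\infty}\Lambda_\alpha(q+i,p)\gamma_i$. For $0\le i<p-q$ only $U$ contributes, and those terms are nonnegative; for $i\ge p-q$ the hypothesis of Lemma 2.2(i) is met, so $\Lambda_\alpha(p+i,q)\ge\Lambda_\alpha(q+i,p)$ and the remaining terms are nonnegative as well. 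Hence $\|U\|^2-\|V\|^2\ge 0$ for every $K_0$, and it is strictly positive for $K_0\ne 0$ (take $c_0^{(1)}=1$ and all other coefficients zero to obtain $\Lambda_\alpha(p,q)>0$, using $p>q$). Therefore the self-commutator form is nonnegative for all $K_0$ if and only if $|a|^2-|b|^2\ge 0$, i.e. $|a|\ge|b|$, which coincides with the stated condition $a\ge b$ in the normalization $a,b\ge 0$. The main obstacle is the bookkeeping behind the factorization—verifying that $U$ and $V$ appear symmetrically in the two images and that the index ranges in Lemma 3.1(ii) and Lemma 2.2(i) line up; once this is settled the equivalence is immediate.
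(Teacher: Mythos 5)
Your proof is correct and follows essentially the same route as the paper: reduce hyponormality to the quadratic form on a general $K_0$, compute the two norms via Lemma 3.1, factor out $(|a|^2-|b|^2)$, and invoke Lemma 2.2(i) to get $\|U\|^2-\|V\|^2\ge 0$. Your explicit check that the cross terms $2\operatorname{Re}\bigl(a\bar b\langle U,V\rangle\bigr)$ cancel is a detail the paper's displayed computation suppresses (its intermediate line even contains a typo in the $|b|^2$ terms), so your write-up is, if anything, slightly more careful.
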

\begin{proof}
	By Lemma 3.1, then $T_{\Phi}$ is hyponormal, which is equivalent to
	\begin{align*}
		& \left|a\right|^2 \sum_{i=0}^{\infty} \Lambda_\alpha(p+i,q) \sum_{l=1}^n \left|  c_i^{(l)}\right|^2 -   \left|a\right|^2 \sum_{i=p-q}^{\infty} \Lambda_\alpha(q+i,p) \sum_{l=1}^n \left|  c_i^{(l)}\right|^2 \\
		& +\left|b\right|^2 \sum_{i=p-q}^{\infty} \Lambda_\alpha(p+i,q) \sum_{l=1}^n \left|  c_i^{(l)}\right|^2 -  \left|b\right|^2 \sum_{i=0}^{\infty} \Lambda_\alpha(q+i,p) \sum_{l=1}^n \left|  c_i^{(l)}\right|^2 \\
		=&\left( \left|a\right|^2 - \left|b\right|^2 \right) \left\{ \sum_{i=0}^{\infty} \Lambda_\alpha(p+i,q) \sum_{l=1}^n \left|  c_i^{(l)}\right|^2 - \sum_{i=p-q}^{\infty} \Lambda_\alpha(q+i,p) \sum_{l=1}^n \left|  c_i^{(l)}\right|^2 \right\} \geq 0.
	\end{align*}
	Applying Lemma 2.2 (i), we obtain the conclusion.
\end{proof}

Next, given $ p,q,s,t \in \mathbb{N}_+\cup {\left\{0\right\}}$, we consider the case when $ \Phi = A z^p \bar{z}^q + B z^s \bar{z}^t$, $ A,B \in M_n $.
\begin{theorem}
	Given $p,q,s,t \in \mathbb{N}_+\cup {\left\{0\right\}}$, $p-q=s-t > 0$, $\Phi(z)=A z^p \bar{z}^q+ B z^s \bar{z}^t$, $ A = (a_{ij})_{n \times n} \in M_n $, $B = (b_{ij})_{n \times n} \in M_n $, $\operatorname{Re} \left( a_{i j} \bar{b}_{i j} \right) \geq 0$ for all $i,j = 1, 2, \dots, n$ and for all $k = 1, 2, \dots, n$,
	\begin{equation}
		\left(\begin{matrix}
			a_{k1}\\
			a_{k2}\\
			\dots\\
			a_{kn}
		\end{matrix}\right)(\bar{a}_{k1},\bar{a}_{k2},\dots,\bar{a}_{kn}) - \left(\begin{matrix}
			\bar{a}_{k1}\\
			\bar{a}_{k2}\\
			\dots\\
			\bar{a}_{kn}
		\end{matrix}\right)(a_{k1},a_{k2},\dots,a_{kn}) \geq 0,
	\end{equation}
	\begin{equation}
		\left(\begin{matrix}
			b_{k1}\\
			b_{k2}\\
			\dots\\
			b_{kn}
		\end{matrix}\right)(\bar{b}_{k1},\bar{b}_{k2},\dots,\bar{b}_{kn}) - \left(\begin{matrix}
			\bar{b}_{k1}\\
			\bar{b}_{k2}\\
			\dots\\
			\bar{b}_{kn}
		\end{matrix}\right)(b_{k1},b_{k2},\dots,b_{kn}) \geq 0,
	\end{equation}
	and for all $k = 1, 2, \dots, n$,
	\begin{equation}
		\left(\begin{matrix}
			a_{k1}\\
			a_{k2}\\
			\dots\\
			a_{kn}
		\end{matrix}\right)( \bar{b}_{k1}, \bar{b}_{k2},\dots, \bar{b}_{kn}),
	\end{equation}
	are diagonal matrices, then $T_{\Phi}$ is hyponormal.
\end{theorem}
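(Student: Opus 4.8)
The plan is to mirror the proof of Theorem 3.1. Since $K_0(z)=\sum_{i=0}^\infty z^i\,(c_i^{(1)},\dots,c_i^{(n)})^T$ with arbitrary coefficients exhausts $A_\alpha^2(\mathbb{C}^n)$, hyponormality of $T_\Phi$ is equivalent to $\langle(T_\Phi^*T_\Phi-T_\Phi T_\Phi^*)K_0,K_0\rangle=\|T_\Phi K_0\|^2-\|T_{\Phi^*}K_0\|^2\ge 0$ for every such $K_0$. First I would write $T_\Phi K_0=P(A\bar z^q K_p)+P(B\bar z^t K_s)$ and $T_{\Phi^*}K_0=P(A^*\bar z^p K_q)+P(B^*\bar z^s K_t)$. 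The decisive structural feature is that $p-q=s-t$: both pieces of $T_\Phi K_0$ land on the same monomials $z^{(p-q)+i}$, so expanding each squared norm via Lemma 3.1 produces an $A$-diagonal part, a $B$-diagonal part, and a Hermitian cross part, the latter carrying the scalar factor $\Lambda_\alpha((p-q)+i)$ coming from $\langle z^{(p-q)+i}u,z^{(p-q)+i}v\rangle=\Lambda_\alpha((p-q)+i)\langle u,v\rangle_{\mathbb{C}^n}$.

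For the diagonal parts I would argue exactly as in Theorem 3.1. Each of the conditions (4) and (5) has the form $uu^*-\bar u\bar u^*\ge 0$ for every row $u$ of $A$ (resp.\ $B$); as $uu^*$ and $\bar u\bar u^*$ are rank-one positive semidefinite matrices of equal trace $\|u\|^2$, this forces equality, hence $\sum_k|\sum_l a_{kl}c_i^{(l)}|^2=\sum_k|\sum_l\bar a_{kl}c_i^{(l)}|^2$ termwise, and similarly for $B$. Lemma 2.2(i) then yields $\Lambda_\alpha(p+i,q)\ge\Lambda_\alpha(q+i,p)$ and $\Lambda_\alpha(s+i,t)\ge\Lambda_\alpha(t+i,s)$ for $i\ge p-q$, so after matching the index ranges the $A$- and $B$-diagonal contributions to $\|T_\Phi K_0\|^2-\|T_{\Phi^*}K_0\|^2$ are nonnegative; the low indices $0\le i<p-q$ occur only on the $\Phi$ side and are manifestly $\ge 0$.

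The heart of the matter is the cross term, and this is exactly where condition (6) and the sign hypothesis $\operatorname{Re}(a_{ij}\bar b_{ij})\ge 0$ are used. I would expand $\langle Ac_i,Bc_i\rangle_{\mathbb{C}^n}=\sum_k\sum_{l,m}\bar b_{kl}a_{km}\bar c_i^{(l)}c_i^{(m)}$; condition (6) says precisely that $a_{km}\bar b_{kl}=0$ whenever $m\neq l$, collapsing the double sum to $\sum_l|c_i^{(l)}|^2\sum_k a_{kl}\bar b_{kl}$. Hence $\operatorname{Re}\langle Ac_i,Bc_i\rangle=\sum_l|c_i^{(l)}|^2\sum_k\operatorname{Re}(a_{kl}\bar b_{kl})\ge 0$, and the identical computation shows $\operatorname{Re}\langle A^*c_i,B^*c_i\rangle$ equals the same nonnegative quantity. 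Thus both cross terms share one nonnegative vector factor, and the comparison collapses to their scalar coefficients.

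Finally I would simplify these coefficients using $\Lambda_\alpha(P,Q)=\Lambda_\alpha(P)^2/\Lambda_\alpha(P-Q)$, which is immediate from the definitions of $\Lambda_\alpha(P)$ and $\Lambda_\alpha(P,Q)$. This turns the $\Phi$-side cross coefficient into $\Lambda_\alpha(p+i)\Lambda_\alpha(s+i)/\Lambda_\alpha(p-q+i)$ and the $\Phi^*$-side one into $\Lambda_\alpha(q+i)\Lambda_\alpha(t+i)/\Lambda_\alpha(q-p+i)$; their comparison for $i\ge p-q$ is exactly Lemma 2.2(ii). Summing the three nonnegative groups gives $\|T_\Phi K_0\|^2\ge\|T_{\Phi^*}K_0\|^2$, proving hyponormality. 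The main obstacle is the cross term: one must recognize that (6) is precisely what makes each Hermitian inner product real, nonnegative, and equal on the two sides, and then that the surviving scalar inequality is nothing but Lemma 2.2(ii); everything else is bookkeeping of the summation ranges.
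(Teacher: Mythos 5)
Your proposal is correct and follows essentially the same route as the paper: expand $\|T_\Phi K_0\|^2-\|T_{\Phi^*}K_0\|^2$ via Lemma 3.1 into $A$-diagonal, $B$-diagonal, and cross parts, kill the off-diagonal cross contributions using the diagonality of (6) together with $\operatorname{Re}(a_{ij}\bar b_{ij})\ge 0$, and conclude with Lemma 2.2 (i) and (ii). Your extra observation that the trace-zero rank-one conditions (4)--(5) actually force equality of $\bigl|\sum_l a_{kl}c_i^{(l)}\bigr|^2$ and $\bigl|\sum_l \bar a_{kl}c_i^{(l)}\bigr|^2$ is a nice sharpening, but it does not change the argument.
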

\begin{proof}
	By Lemma 3.1, $T_{\Phi}$ is hyponormal, which is equivalent to
	\begin{equation}
		\begin{aligned}
			&\left\| P\left(A \bar{z}^q K_p(z)\right) + P\left(B \bar{z}^t K_s(z)\right) \right\|^2- \left\| P\left(A^* \bar{z}^p K_q(z)\right) + P\left(A^* \bar{z}^s K_t(z)\right)  \right\|^2\\
			=&\sum_{i=0}^{\infty} \Lambda_\alpha(p+i,q) \sum_{k=1}^n \left| \sum_{l=1}^n a_{k l} c_i^{(l)}\right|^2 - \sum_{i=p-q}^{\infty} \Lambda_\alpha(q+i,p) \sum_{k=1}^n \left| \sum_{l=1}^n \bar{a}_{k l} c_i^{(l)}\right|^2 \\
			& +\sum_{i=0}^{\infty} \Lambda_\alpha(s+i,t) \sum_{k=1}^n \left| \sum_{l=1}^n b_{k l} c_i^{(l)}\right|^2 - \sum_{i=s-t}^{\infty} \Lambda_\alpha(t+i,s) \sum_{k=1}^n \left| \sum_{l=1}^n \bar{b}_{k l} c_i^{(l)}\right|^2 \\
			& +2 \sum_{i=0}^{\infty} \sum_{k=1}^n \operatorname{Re}  \left( \left( \sum_{l=1}^n a_{k l} c_i^{(l)} \right) \left( \sum_{l=1}^n \bar{b}_{k l} \bar{c}_{i}^{(l)} \right) \right) \frac{\Lambda_\alpha(p+i) \Lambda_\alpha(s+i)}{\Lambda_\alpha(p-q+i)} \\
			& - 2 \sum_{i=p-q}^{\infty}  \sum_{k=1}^n \operatorname{Re} \left( \left( \sum_{l=1}^n \bar{a}_{k l} c_i^{(l)} \right) \left( \sum_{l=1}^n b_{k l} \bar{c}_i^{(l)} \right) \right) \frac{\Lambda_\alpha(q+i) \Lambda_\alpha(t+i)}{\Lambda_\alpha(q-p+i)} \geq 0.
		\end{aligned}
	\end{equation}
	Since both of (4) and (5) hold for $k = 1, 2, \dots, n$, then for $k = 1, 2, \dots, n$,
	\begin{equation}
		\left| \sum_{l=1}^n a_{k l} c_i^{(l)}\right|^2 - \left| \sum_{l=1}^n \bar{a}_{k l} c_i^{(l)}\right|^2 \geq 0.
	\end{equation}
	\begin{equation}
		\left| \sum_{l=1}^n b_{k l} c_i^{(l)}\right|^2 - \left| \sum_{l=1}^n \bar{b}_{k l} c_i^{(l)}\right|^2 \geq 0.
	\end{equation}
	Since $\operatorname{Re} \left( a_{i j} \bar{b}_{i j} \right) \geq 0$ for all $i,j = 1, 2, \dots, n$ and (6) are diagonal matrices for all $k = 1, 2, \dots, n$, then for all $k = 1,2, \dots, n$,
	\begin{equation}
		\begin{aligned}
			& \quad \operatorname{Re} \left( \left(\sum_{l=1}^n a_{k l} c_i^{(l)} \right) \left( \sum_{l=1}^n \bar{b}_{k l} \bar{c}_i^{(l)} \right) \right) \\
			&= \sum_{l=1}^n \operatorname{Re} \left( a_{k l} \bar{b}_{k l} \right) \left|c_{i}^{(l)}\right|^2 + \sum_{k_1,k_2=1, k_1 \neq k_2}^n \operatorname{Re} \left( a_{k k_1} \bar{b}_{k k_2} c_i^{(k_1)} \bar{c}_i^{(k_2)} \right) \\
			&=\sum_{l=1}^n \operatorname{Re} \left( a_{k l} \bar{b}_{k l} \right) \left|c_{i}^{(l)}\right|^2 \geq 0.
		\end{aligned}
	\end{equation}
	Combined with (8), (9), (10) and Lemma 2.2 (i) (ii), then (7) holds, hence $T_{\Phi}$ is hyponormal.
\end{proof}
\begin{corollary}
	Given $p,q,s,t \in \mathbb{N}_+\cup {\left\{0\right\}}$, $p-q=s-t > 0$, if $\Phi(z)=A z^p \bar{z}^q+ B z^s \bar{z}^t$, $ A = (a_{ij})_{n \times n} \in M_n $, $B = (b_{ij})_{n \times n} \in M_n $ satisfying $A^*=A$ and $B^*=B$ and for all $k = 1, 2, \dots, n$, 
	\begin{equation}
		\left(\begin{matrix}
			a_{k1}\\
			a_{k2}\\
			\dots\\
			a_{kn}
		\end{matrix}\right)(  b_{k1}, b_{k2},\dots, b_{kn}) \geq 0,
	\end{equation}
	then $T_{\Phi}$ is hyponormal.
\end{corollary}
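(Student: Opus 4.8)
The plan is to try to deduce the statement as a direct corollary of Theorem 3.2, by checking its three hypotheses from $A^{*}=A$, $B^{*}=B$ and the positivity (11). Reading $A^{*}=A$, $B^{*}=B$ as in Corollary 3.1, i.e. that $A,B$ are real ($\bar a_{kl}=a_{kl}$, $\bar b_{kl}=b_{kl}$), the left-hand sides of conditions (4) and (5) become the difference of a matrix with itself and vanish identically, so (4) and (5) hold for free. Realness also turns the sign requirement $\operatorname{Re}(a_{ij}\bar b_{ij})\ge 0$ into $a_{ij}b_{ij}\ge 0$, and these numbers sit on the diagonal of the matrix in (11): its $(l,l)$ entry is exactly $a_{kl}b_{kl}$, and a positive semidefinite matrix has nonnegative diagonal, so letting $k$ range over $1,\dots,n$ supplies $a_{ij}b_{ij}\ge 0$ for every pair $(i,j)$.

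The remaining, and I expect decisive, point is condition (6): for each $k$ the matrix with $(i,j)$ entry $a_{ki}\bar b_{kj}=a_{ki}b_{kj}$ must be diagonal. This is where the single matrix inequality (11) must do real work, and it is the step I would budget the most effort for, since positive semidefiniteness of a rank-one outer product $\mathbf a_k\mathbf b_k^{T}$ does not by itself kill its off-diagonal entries (for instance $\mathbf a_k=\mathbf b_k=(1,\dots,1)^{T}$). So a verbatim reduction to Theorem 3.2 looks unavailable in general, and I would not force (6) out of (11).

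Instead I would return to the self-commutator expansion (7) furnished by Lemma 3.1 and prove nonnegativity of its whole right-hand side directly. Because $A,B$ are real, the two ``pure $A$'' sums share the coefficient $\big|\sum_l a_{kl}c_i^{(l)}\big|^2$, so their difference is $\sum_{i}\Lambda_\alpha(p+i,q)\big|\sum_l a_{kl}c_i^{(l)}\big|^2-\sum_{i\ge p-q}\Lambda_\alpha(q+i,p)\big|\sum_l a_{kl}c_i^{(l)}\big|^2$, which is $\ge 0$ by Lemma 2.2(i) (term by term for $i\ge p-q$, with the extra initial terms nonnegative); the ``pure $B$'' sums are handled identically. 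For the cross terms I would set $\mathbf c=(c_i^{(1)},\dots,c_i^{(n)})^{T}$ and let $\mathbf a_k,\mathbf b_k$ be the (real) $k$-th rows, and observe that the summand $\operatorname{Re}\big((\sum_l a_{kl}c_i^{(l)})(\sum_l \bar b_{kl}\bar c_i^{(l)})\big)$ equals $\operatorname{Re}(\mathbf c^{T}(\mathbf a_k\mathbf b_k^{T})\bar{\mathbf c})$; writing $\mathbf c=\mathbf x+i\mathbf y$ gives $\mathbf x^{T}(\mathbf a_k\mathbf b_k^{T})\mathbf x+\mathbf y^{T}(\mathbf a_k\mathbf b_k^{T})\mathbf y$, which is $\ge 0$ precisely by the positivity (11). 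Since realness makes both cross-sums carry this same nonnegative summand, Lemma 2.2(ii) lets me dominate the subtracted sum by the added one, so the cross contribution is $\ge 0$ and (7) follows. Either way the crux is controlling the cross terms; the merit of this direct route is that (11) enters as genuine quadratic-form positivity rather than being asked to deliver the diagonality (6).
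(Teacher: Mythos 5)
Your proposal is correct and follows essentially the same route as the paper: the paper's proof likewise does not invoke the diagonality hypothesis (6) of Theorem 3.2, but instead uses realness of $A,B$ to equate the two cross-term summands and derives $\operatorname{Re}\bigl(\bigl(\sum_l a_{kl}c_i^{(l)}\bigr)\bigl(\sum_l \bar b_{kl}\bar c_i^{(l)}\bigr)\bigr)\ge 0$ directly from the positive semidefiniteness in (11), then finishes exactly as in Theorem 3.2 via Lemma 2.2 (i)--(ii). Your explicit $\mathbf{c}=\mathbf{x}+i\mathbf{y}$ quadratic-form computation just spells out the step the paper leaves implicit, and your remark that (6) genuinely cannot be extracted from (11) (e.g.\ $\mathbf a_k=\mathbf b_k=(1,\dots,1)^{T}$) is a valid observation about why the corollary is not a verbatim specialization of Theorem 3.2.
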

\begin{proof}
	Since $A,B$ are real matrices and (11) holds for $k = 1,2, \dots, n$, then for all $k = 1,2, \dots, n$,
	$$
	\operatorname{Re} \left( \left(\sum_{l=1}^n a_{k l} c_i^{(l)} \right) \left( \sum_{l=1}^n \bar{b}_{k l} \bar{c}_i^{(l)} \right) \right) = \operatorname{Re} \left( \left(\sum_{l=1}^n \bar{a}_{k l} c_i^{(l)} \right) \left( \sum_{l=1}^n b_{k l} \bar{c}_i^{(l)} \right) \right) \geq 0.
	$$
	A similar argument with Theorem 3.2's proof, then we complete the proof. 
\end{proof}
\begin{corollary}
	Given $p,s \in \mathbb{N}_+\cup {\left\{0\right\}}$, if $\Phi(z)=A \left| z \right|^{2p}+ B \left| z \right|^{2s}$, $ A = (a_{ij})_{n \times n} \in M_n $, $B = (b_{ij})_{n \times n} \in M_n $ satisfying $A^*=A$ and $B^*=B$, then $T_{\Phi}$ is normal hence hyponormal.
\end{corollary}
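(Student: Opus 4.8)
The plan is to exploit the feature that here the exponents satisfy $p-q = s-t = 0$, so the symbol carries no imbalance between its analytic and anti-analytic parts and the operator will in fact be self-adjoint. First I would record that $\left|z\right|^{2p}=z^p\bar z^p$ and $\left|z\right|^{2s}=z^s\bar z^s$ are \emph{real-valued} scalar functions, so taking the pointwise matrix adjoint of $\Phi$ gives
\[
\Phi(z)^* = \left|z\right|^{2p}A^* + \left|z\right|^{2s}B^* = A\left|z\right|^{2p} + B\left|z\right|^{2s} = \Phi(z),
\]
where the middle equality uses the hypotheses $A^*=A$ and $B^*=B$ together with the fact that real scalars commute with matrices. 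Hence $\Phi$ is a self-adjoint matrix-valued symbol, i.e.\ $\Phi^*=\Phi$.

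Next I would invoke the adjoint identity $T_{\Phi_1}^* = T_{\Phi_1^*}$ recorded in Remark 2.1. Applying it with $\Phi_1=\Phi$ yields $T_\Phi^* = T_{\Phi^*} = T_\Phi$, so $T_\Phi$ is self-adjoint. Consequently its self-commutator vanishes identically,
\[
T_\Phi^* T_\Phi - T_\Phi T_\Phi^* = T_\Phi T_\Phi - T_\Phi T_\Phi = \mathbb{O},
\]
which says exactly that $T_\Phi$ is normal; and a normal operator is hyponormal because its self-commutator equals $\mathbb{O}\geq 0$, matching Definition 2.1.

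The only point requiring care is to notice that this statement is \emph{not} a special case of Theorem 3.2: there the hypothesis $p-q=s-t>0$ is essential, whereas now $p-q=s-t=0$, so the self-commutator expression (7) and the monotonicity estimates of Lemma 2.2 do not apply. The substitute is the elementary observation that a balanced Hermitian-coefficient symbol of the form $A\left|z\right|^{2p}+B\left|z\right|^{2s}$ is automatically self-adjoint, which forces normality outright. In this sense there is no genuine obstacle: the whole content is the reduction to self-adjointness through Remark 2.1, and normality is then stronger than the hyponormality asserted.
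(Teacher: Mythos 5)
Your proof is correct, but it takes a genuinely different route from the paper's. The paper proves this corollary by substituting $q=p$ and $t=s$ into the expanded quadratic form (7) for $\left\langle (T_{\Phi}^*T_{\Phi}-T_{\Phi}T_{\Phi}^*)K_0(z),K_0(z)\right\rangle$ obtained in the proof of Theorem 3.2, and observing that the four ``square'' terms and the two cross terms then cancel in pairs, so the self-commutator vanishes identically. You instead observe that the symbol is pointwise self-adjoint and invoke Remark 2.1 to get $T_{\Phi}^*=T_{\Phi^*}=T_{\Phi}$, so the operator is self-adjoint and normality is immediate with no computation at all. Your route is shorter and more conceptual, and it also sidesteps the question you rightly flag: identity (7) was derived under the standing hypothesis $p-q=s-t>0$, and the paper evaluates it at $p=q$, $s=t$ without comment (the expansion does remain a valid identity there, but this deserves a word). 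One small caution on notation: elsewhere the paper appears to use $A^*$ for the entrywise conjugate rather than the conjugate transpose (Corollary 3.1 glosses $A^*=A$ as ``$A$ is a real matrix''); under either reading of the star, the hypotheses $A^*=A$ and $B^*=B$ still give $\Phi^*=\Phi$ for the adjoint operation that appears in Remark 2.1, since $|z|^{2p}$ and $|z|^{2s}$ are real scalars, so your argument closes in both conventions.
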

\begin{proof}
	Put $q = p$ and $t=s$ into (7), then $(2) \equiv 0$, hence $T_{\Phi}$ is normal. 
\end{proof} 
\begin{theorem}
	Given $ p,q,s,t,r \in \mathbb{N}_+\cup {\left\{0\right\}}$, $p-q=t-s > 0$, let $\Phi(z)=A z^p \bar{z}^q + B z^s \bar{z}^t$, $ A = (a_{ij})_{n \times n}, B = (b_{ij})_{n \times n} \in M_n $. If $T_{\Phi}$ is hyponormal then
	
	(i) If $t \geq p$, then for any $1 \leq r \leq n $, any sequence $1 \leq n_1 < n_2 \dots < n_r \leq n$ and any $c_1, c_2, \dots, c_r \in \mathbb{R}$,  
	\begin{align*}
		&\quad \sum_{k=1}^n \left| c_1 a_{k n_1} + c_2 a_{k n_2} + \dots + c_r a_{k n_r} \right|^2 \\
		&\geq \max \left\{ \frac{\Lambda_\alpha(t+i,s)}{\Lambda_\alpha(p+i,q)}, W_{\alpha}(p, q, t, s) \right\} \sum_{k=1}^n \left|c_1 b_{k n_1} + c_2 b_{k n_2} + \dots + c_r b_{k n_r} \right|^2.
	\end{align*}
	
	(ii) If $t<p$, then for any $1 \leq r \leq n $, any sequence $1 \leq n_1 < n_2 \dots < n_r \leq n$ and any $c_1, c_2, \dots, c_r \in \mathbb{R}$, 
	\begin{align*}
		&\quad \sum_{k=1}^n \left| c_1 a_{k n_1} + c_2 a_{k n_2} + \dots + c_r a_{k n_r} \right|^2 \\
		&\geq \max \left\{\frac{\Lambda_\alpha(t, s)}{\Lambda_\alpha(p, q)}, W_{\alpha}(p, q, t, s)\right\} \sum_{k=1}^n \left|c_1 b_{k n_1} + c_2 b_{k n_2} + \dots + c_r b_{k n_r} \right|^2.
	\end{align*}
	Where $W_{\alpha}(p, q, t, s)=\displaystyle\sup _{i \geq p-q} \frac{\Lambda_\alpha(t+i, s)-\Lambda_\alpha(s+i, t)}{\Lambda_\alpha(p+i, q)-\Lambda_\alpha(q+i, p)}$.
\end{theorem}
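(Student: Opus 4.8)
The plan is to establish this necessity statement by probing the self-commutator of $T_\Phi$ with a family of one-term test functions and reading off the stated inequalities. Write $d:=p-q=t-s>0$. Since $T_\Phi$ is assumed hyponormal, $\|T_\Phi f\|^2\ge\|T_\Phi^* f\|^2$ holds for every $f\in A_\alpha^2(\mathbb{C}^n)$, and I only need this for cleverly chosen $f$. Fix an integer $i_0\ge 0$, an increasing sequence $1\le n_1<\dots<n_r\le n$, and reals $c_1,\dots,c_r$, and let $w\in\mathbb{C}^n$ be the real vector with $w_{n_j}=c_j$ and all other entries $0$. I take $f=K_0$ with the single nonzero coefficient $v_{i_0}=w$, i.e. $f(z)=z^{i_0}w$. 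By Lemma 3.1 each of $T_\Phi f=P(A\bar z^q K_p)+P(B\bar z^t K_s)$ and $T_\Phi^* f=P(A^*\bar z^p K_q)+P(B^*\bar z^s K_t)$ collapses to (at most) two monomials in $z$: the $A$- and $B$-summands of $T_\Phi f$ sit at the powers $z^{d+i_0}$ and $z^{i_0-d}$ (the latter only when $i_0\ge d$), whereas the $A^*$- and $B^*$-summands of $T_\Phi^* f$ sit at $z^{i_0-d}$ and $z^{d+i_0}$. Because $d>0$ these two powers are always distinct, so every cross term in $\|T_\Phi f\|^2$ and in $\|T_\Phi^* f\|^2$ vanishes; this orthogonality is exactly where the hypothesis $p-q=t-s>0$ enters. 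Moreover, since $w$ is real one has $\sum_l\bar a_{kl}w_l=\overline{\sum_l a_{kl}w_l}$, so the co-analytic quadratic forms coincide with the analytic ones, and likewise for $B$.

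Carrying this out and abbreviating $\|Aw\|^2:=\sum_{k=1}^n|\sum_{l=1}^n a_{kl}w_l|^2$ and $\|Bw\|^2:=\sum_{k=1}^n|\sum_{l=1}^n b_{kl}w_l|^2$, the inequality $\|T_\Phi f\|^2\ge\|T_\Phi^* f\|^2$ splits into two regimes according to $i_0$. For $0\le i_0<d$ only the $A$-term survives on the left and only the $B^*$-term on the right, giving
\[
\Lambda_\alpha(p+i_0,q)\,\|Aw\|^2\ \ge\ \Lambda_\alpha(t+i_0,s)\,\|Bw\|^2 .
\]
For $i_0\ge d$ all four monomials are present, and after using that the co-analytic forms equal the analytic ones one obtains
\[
\bigl(\Lambda_\alpha(p+i_0,q)-\Lambda_\alpha(q+i_0,p)\bigr)\|Aw\|^2\ \ge\ \bigl(\Lambda_\alpha(t+i_0,s)-\Lambda_\alpha(s+i_0,t)\bigr)\|Bw\|^2 .
\]

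It remains to extract the stated factors. In the first regime, dividing gives $\|Aw\|^2\ge\frac{\Lambda_\alpha(t+i_0,s)}{\Lambda_\alpha(p+i_0,q)}\|Bw\|^2$ for every $0\le i_0<d$; by the monotonicity in Lemma 2.2(iii) the sharpest admissible choice is $i_0$ large when $t\ge p$ (producing the factor $\frac{\Lambda_\alpha(t+i,s)}{\Lambda_\alpha(p+i,q)}$ of part (i)) and $i_0=0$ when $t<p$ (producing $\frac{\Lambda_\alpha(t,s)}{\Lambda_\alpha(p,q)}$ of part (ii)). In the second regime, Lemma 2.2(i) ensures $\Lambda_\alpha(p+i_0,q)-\Lambda_\alpha(q+i_0,p)>0$ for $i_0\ge d=p-q$, so I may divide to get $\|Aw\|^2\ge\frac{\Lambda_\alpha(t+i_0,s)-\Lambda_\alpha(s+i_0,t)}{\Lambda_\alpha(p+i_0,q)-\Lambda_\alpha(q+i_0,p)}\|Bw\|^2$, and taking the supremum over $i_0\ge p-q$ yields the factor $W_\alpha(p,q,t,s)$. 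Combining the two regimes gives $\|Aw\|^2\ge\max\{\,\cdot\,,W_\alpha(p,q,t,s)\}\,\|Bw\|^2$, and rewriting $\|Aw\|^2=\sum_k|c_1 a_{kn_1}+\dots+c_r a_{kn_r}|^2$ (and similarly for $B$) is precisely the claimed conclusion. The step I expect to be the main obstacle is the careful bookkeeping: confirming exactly which of the four Lemma 3.1 terms survive in each regime, checking that every resulting inequality points the same way, and verifying the strict positivity of the denominator $\Lambda_\alpha(p+i_0,q)-\Lambda_\alpha(q+i_0,p)$ via Lemma 2.2(i) so that the passage to $W_\alpha(p,q,t,s)$ is legitimate.
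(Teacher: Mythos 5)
Your proof is correct and follows essentially the same route as the paper: both expand $\|T_\Phi f\|^2-\|T_\Phi^* f\|^2$ via Lemma 3.1, test it on a single monomial $z^{i_0}w$ with $w$ real and supported on $\{n_1,\dots,n_r\}$ (so all cross terms vanish because $p-q=t-s>0$ separates the surviving powers), and then split into the regimes $0\le i_0<p-q$ and $i_0\ge p-q$, using Lemma 2.2(iii) and (i) to identify the two factors in the maximum. Your explicit check that $\Lambda_\alpha(p+i_0,q)-\Lambda_\alpha(q+i_0,p)$ is positive before dividing is a detail the paper passes over silently, but otherwise the arguments coincide.
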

\begin{proof}
	By Lemma 3.1, then $T_{\Phi}$ is hyponormal, which is equivalent to
	\begin{equation}
		\begin{aligned}
			&\sum_{i=0}^{\infty} \Lambda_\alpha(p+i,q) \sum_{k=1}^n \left| \sum_{l=1}^n a_{k l} c_i^{(l)}\right|^2 - \sum_{i=p-q}^{\infty} \Lambda_\alpha(q+i,p) \sum_{k=1}^n \left| \sum_{l=1}^n \bar{a}_{k l} c_i^{(l)}\right|^2 \\
			& +\sum_{i=p-q}^{\infty} \Lambda_\alpha(s+i,t) \sum_{k=1}^n \left| \sum_{l=1}^n b_{k l} c_i^{(l)}\right|^2 - \sum_{i=0}^{\infty} \Lambda_\alpha(t+i,s) \sum_{k=1}^n \left| \sum_{l=1}^n \bar{b}_{k l} c_i^{(l)}\right|^2 \\
			& +2 \sum_{i=0}^{\infty} \sum_{k=1}^n \operatorname{Re}  \left( \left( \sum_{l=1}^n a_{k l} c_i^{(l)} \right) \left( \sum_{l=1}^n \bar{b}_{k l} \bar{c}_{2(p-q)+i}^{(l)} \right) \right) \frac{ \Lambda_\alpha(p+i)\Lambda_\alpha(p-q+t+i)}{\Lambda_\alpha(p-q+i)} \\
			& - 2 \sum_{i=0}^{\infty}  \sum_{k=1}^n \operatorname{Re} \left( \left( \sum_{l=1}^n \bar{a}_{k l} c_{2(p-q)+i}^{(l)} \right) \left( \sum_{l=1}^n b_{k l} \bar{c}_i^{(l)} \right) \right) \frac{\Lambda_\alpha(t+i) \Lambda_\alpha(t-s+p+i)}{\Lambda_\alpha(t-s+i)} \geq 0.
		\end{aligned}
	\end{equation}
	For $0 \leq i < p-q$, put $c_{i}^{(n_k)} = c_k$ $(k = 1,2,\dots,r)$, $c_{i}^{(l_1)} = 0$ $(l_1 \neq n_1, n_2 \dots n_r)$ and $c_{j}^{(l_2)} = 0$ $(j \neq i)$ for $ l_2 = 1,2, \dots , n$ into (12), then
	\begin{align*}
		&\quad \sum_{k=1}^n \left| c_1 a_{k n_1} + c_2 a_{k n_2} + \dots + c_r a_{k n_r} \right|^2 \\
		&\geq \frac{\Lambda_\alpha(t+i,s)}{\Lambda_\alpha(p+i,q)} \sum_{k=1}^n \left|c_1 b_{k n_1} + c_2 b_{k n_2} + \dots +c_r b_{k n_r} \right|^2.
	\end{align*}
	Therefore 
	\begin{equation}
		\begin{aligned}
			&\quad \sum_{k=1}^n \left| c_1 a_{k n_1} + c_2 a_{k n_2} + \dots + c_r a_{k n_r} \right|^2 \\
			&\geq \max_{0 \leq i<p-q} \frac{\Lambda_\alpha(t+i,s)}{\Lambda_\alpha(p+i,q)} \sum_{k=1}^n \left|c_1 b_{k n_1} + c_2 b_{k n_2} + \dots +c_r b_{k n_r} \right|^2.
		\end{aligned}
	\end{equation}
	
	To deal with (13), we divide it into two cases. Applying Lemma 2.2 (iii), then \\
	If $t \geq p$, it follows that $\displaystyle\frac{\Lambda_\alpha(t+i,s)}{\Lambda_\alpha(p+i,q)}$ is increasing in $i$; choose $i = p-q-1 = t-s-1$, then 
	\begin{equation}
		\begin{aligned}
			&\quad \sum_{k=1}^n \left|c_1 a_{k n_1} +c_2 a_{k n_2} + \dots +c_r a_{k n_r} \right|^2 \\
			&\geq \frac{\Lambda_\alpha(2t-s-1,s)}{\Lambda_\alpha(2p-q-1,q)} \sum_{k=1}^n \left|c_1 b_{k n_1} +c_2 b_{k n_2} + \dots + c_r b_{k n_r} \right|^2.
		\end{aligned}
	\end{equation}
	If $t<p$, it follows that $\displaystyle\frac{\Lambda_\alpha(t+i,s)}{\Lambda_\alpha(p+i,q)}$ is decreasing in $i$; choose $i= 0$, then
	\begin{equation}
		\begin{aligned}
			&\quad \sum_{k=1}^n \left| c_1 a_{k n_1} +c_2 a_{k n_2} + \dots + c_r a_{k n_r} \right|^2 \\
			&\geq \frac{\Lambda_\alpha(t,s)}{\Lambda_\alpha(p,q)} \sum_{k=1}^n \left|c_1 b_{k n_1} +c_2 b_{k n_2} + \dots +c_r b_{k n_r} \right|^2.
		\end{aligned}
	\end{equation}
	Similarly, for $i \geq p-q$, put $c_{i}^{(n_k)} = c_k$ $(k = 1,2,\dots,r)$, $c_{i}^{(l_1)} = 0$ $(l_1 \neq n_1, n_2 \dots n_r)$ and $c_{j}^{(l_2)} = 0$ $(j \neq i)$ for $ l_2 = 1,2, \dots , n$ into (12), then
	\begin{equation}
		\begin{aligned}
			&\quad \sum_{k=1}^n \left|c_1 a_{k n_1} + c_2 a_{k n_2} + \dots + c_r a_{k n_r} \right|^2 \\
			&\geq \sup _{i \geq p-q} \frac{\Lambda_\alpha(t+i, s)-\Lambda_\alpha(s+i, t)}{\Lambda_\alpha(p+i, q)-\Lambda_\alpha(q+i, p)} \sum_{k=1}^n \left|c_1 b_{k n_1} +c_2 b_{k n_2} + \dots +c_r b_{k n_r} \right|^2.
		\end{aligned}
	\end{equation}
	Combined with (14), (15), (16), we complete the proof.
\end{proof}
\begin{remark}
	The value of $\displaystyle \sup _{i \geq p-q} \frac{\Lambda_\alpha(t+i, s)-\Lambda_\alpha(s+i, t)}{\Lambda_\alpha(p+i, q)-\Lambda_\alpha(q+i, p)}$ can be calculated by taking the derivative when $p,q$ is known. Please refer to \cite{KL2023} for an example.
\end{remark}

\section{\hspace{-0.6cm}{\bf }~~Matrix-valued circulant symbols\label{s4}}
In this section, given $ p,q,s,t \in \mathbb{N}_+\cup {\left\{0\right\}}$, we consider two conditions for the hyponormality of $T_{\Phi}$ on $A_\alpha^2\left(\mathbb{C}^n\right)$, where $ \Phi(z)=A z^p \bar{z}^q + B z^s \bar{z}^t $, $ A,B $ are circulants. 

The following lemma on $A_\alpha^2\left(\mathbb{C}^n\right)$ from a direct calculation by Lemma 2.1.
\begin{lemma}
	Given $p,q \in \mathbb{N}_+\cup {\left\{0\right\}}$ and $a_k \in \mathbb{C} ~ (k = 1,2 \dots n) $, then 
	
	(i) $\left\|\operatorname{cir}\left[a_1, a_2, \cdots, a_{n}\right] \bar{z}^q K_p(z)\right\|^2 $
	$$
	\begin{aligned}
		~=\sum_{i=0}^{\infty} \Lambda_\alpha(p+i+q) \left[\sum_{k=1}^{n}\left|a_k\right|^2 \sum_{l=1}^n\left|c_{i}^{(l)}\right|^2+\sum_{k_1, k_2=1, k_1 \neq k_2}^{n} \bar{a}_{k_1} a_{k_2} \sum_{l_1, l_2=1, l_1 \neq l_2}^n \bar{c}_{i}^{\left(l_1\right)} c_{i}^{\left(l_2\right)}\right].		
	\end{aligned}
	$$
	
	(ii) If $p \geq q$, then
	\begin{align*}
		& \left\|P\left(\operatorname{cir}\left[a_1, a_2, \cdots, a_{n}\right] \bar{z}^q K_p(z)\right)\right\|^2 \\
		=&\sum_{i=0}^{\infty} \Lambda_\alpha(p+i,q) \left[\sum_{k=1}^{n}\left|a_k\right|^2 \sum_{l=1}^n\left|c_{i}^{(l)}\right|^2+\sum_{k_1, k_2=1, k_1 \neq k_2}^{n} \bar{a}_{k_1} a_{k_2} \sum_{l_1, l_2=1, l_1 \neq l_2}^n \bar{c}_{i}^{\left(l_1\right)} c_{i}^{\left(l_2\right)}\right].
	\end{align*}
	If $p < q$, then
	$$
	\begin{aligned}
		& \left\|P\left(\operatorname{cir}\left[a_1, a_2, \cdots, a_{n}\right] \bar{z}^q K_p(z)\right)\right\|^2 \\
		=&\sum_{i=q-p}^{\infty} \Lambda_\alpha(p+i,q) \left[\sum_{k=1}^{n}\left|a_k\right|^2 \sum_{l=1}^n\left|c_{i}^{(l)}\right|^2+\sum_{k_1, k_2=1, k_1 \neq k_2}^{n} \bar{a}_{k_1} a_{k_2} \sum_{l_1, l_2=1, l_1 \neq l_2}^n \bar{c}_{i}^{\left(l_1\right)} c_{i}^{\left(l_2\right)}\right].
	\end{aligned}
	$$
\end{lemma}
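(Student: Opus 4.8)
The plan is to treat (i) and (ii) in parallel, reducing each to a single per-coefficient identity for the circulant $C=\operatorname{cir}[a_1,\dots,a_n]$. Writing $v_i=(c_i^{(1)},\dots,c_i^{(n)})^{T}$ for the $i$-th Taylor coefficient of $K_p$, I would first record that $C\bar z^q K_p(z)=\sum_{i=0}^\infty \bar z^q z^{p+i}\,(Cv_i)$, so that everything is controlled by the monomials $\bar z^q z^{p+i}$ together with the vectors $Cv_i\in\mathbb{C}^n$. The whole computation then rests on the two orthogonality relations recorded in Lemma 2.1.

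For (i) I would expand $\|C\bar z^q K_p\|^2$ componentwise in $\mathbb{C}^n$ and integrate. Since $\langle z^{p+i}\bar z^q,\,z^{p+j}\bar z^q\rangle=0$ for $i\neq j$ and equals $\Lambda_\alpha(p+i+q)$ for $i=j$, every cross term in the summation index drops out, leaving $\|C\bar z^q K_p\|^2=\sum_{i=0}^\infty \Lambda_\alpha(p+i+q)\,\|Cv_i\|_{\mathbb{C}^n}^2$. For (ii) I would apply $P$ term by term via the second identity of Lemma 2.1: $P(\bar z^q z^{p+i})=\tfrac{\Lambda_\alpha(p+i,q)}{\Lambda_\alpha(p+i)}z^{p+i-q}$ when $p+i\ge q$ and $0$ otherwise, which is precisely what forces the summation to begin at $i=0$ when $p\ge q$ and at $i=q-p$ when $p<q$. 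The projected monomials are again mutually orthogonal, so taking norms leaves $\sum_i\big(\tfrac{\Lambda_\alpha(p+i,q)}{\Lambda_\alpha(p+i)}\big)^2\Lambda_\alpha(p+i-q)\,\|Cv_i\|^2$, and a short manipulation of the Gamma factors in the definitions of $\Lambda_\alpha$ collapses the weight to $\Lambda_\alpha(p+i,q)$. Thus both parts reduce to evaluating the scalar $\|Cv_i\|^2$.

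The remaining step is to expand $\|Cv_i\|^2=\sum_{k=1}^n\big|\sum_{l=1}^n C_{kl}c_i^{(l)}\big|^2$ into the bracketed form. The diagonal part is immediate: each column of a circulant is a cyclic rearrangement of $(a_1,\dots,a_n)$, so the $l_1=l_2$ contribution is $\big(\sum_k|a_k|^2\big)\big(\sum_l|c_i^{(l)}|^2\big)$. For the off-diagonal part one collects the coefficient of $\bar c_i^{(l_1)}c_i^{(l_2)}$, which is exactly the $(l_1,l_2)$ entry of $C^{*}C$; because $C^{*}C$ is again circulant, this coefficient depends only on $l_2-l_1 \bmod n$.

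The step I expect to be the main obstacle is precisely this off-diagonal reorganization into the product $\big(\sum_{k_1\neq k_2}\bar a_{k_1}a_{k_2}\big)\big(\sum_{l_1\neq l_2}\bar c_i^{(l_1)}c_i^{(l_2)}\big)$. Since the per-pair coefficient is the cyclic autocorrelation $\sum_{j}\bar a_j a_{j+(l_2-l_1)}$ rather than an index-free constant, I would verify with care that these autocorrelations genuinely assemble into the claimed factored expression, checking the identity explicitly on small $n$ before committing to the general bookkeeping; this is the point where the cyclic structure must be used most precisely and where an error is most likely to creep in.
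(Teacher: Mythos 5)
Your reduction of both parts to a single per-coefficient identity is correct and is exactly what the paper means by ``a direct calculation by Lemma 2.1'' (the paper prints no proof of this lemma): writing $C=\operatorname{cir}[a_1,\dots,a_n]$ and $v_i=(c_i^{(1)},\dots,c_i^{(n)})^{T}$, the orthogonality of the monomials $\bar z^{q}z^{p+i}$ kills all cross terms in $i$, the projection formula selects the starting index $0$ when $p\ge q$ and $q-p$ when $p<q$, and the weight $\bigl(\Lambda_\alpha(p+i,q)/\Lambda_\alpha(p+i)\bigr)^{2}\Lambda_\alpha(p+i-q)$ does collapse to $\Lambda_\alpha(p+i,q)$ upon substituting the Gamma-function definitions. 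Up to that point your argument is sound and agrees with the intended one.

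However, the step you flag as the likely obstacle is a genuine gap, and it does not go through. As you correctly observe, the coefficient of $\bar c_i^{(l_1)}c_i^{(l_2)}$ in $\|Cv_i\|^{2}$ is $(C^{*}C)_{l_1l_2}=\sum_{j}\bar a_{j}a_{j+(l_2-l_1)}$ (indices mod $n$), the cyclic autocorrelation at lag $l_2-l_1$. The claimed factored form requires every nonzero-lag autocorrelation to equal the \emph{total} off-diagonal sum $\sum_{k_1\neq k_2}\bar a_{k_1}a_{k_2}$; since the $n-1$ nonzero-lag autocorrelations add up to that total, this forces $n=2$ or the vanishing of all of them. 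For $n\ge 3$ the identity is false in general: take $n=3$, $a=(1,1,0)$ and $v=(1,1,0)^{T}$; then $Cv=(2,1,1)^{T}$ and $\|Cv\|^{2}=6$, while the bracketed expression evaluates to $2\cdot 2+2\cdot 2=8$. So your proposal cannot be completed as written, and indeed the lemma itself (and the Section 4 computations resting on it) holds only under an additional hypothesis, e.g.\ $n=2$, or equality of all nonzero-lag autocorrelations of $(a_1,\dots,a_n)$ together with the vanishing of their sum. Your instinct to test the factorization on small $n$ before committing to it was exactly right.
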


First, given $ p,q \in \mathbb{N}_+\cup {\left\{0\right\}}$, we consider the case when $\Phi(z)=\operatorname{cir}\left[a_1, a_2, \cdots, a_{n}\right] z^p \bar{z}^q $, $a_k \in \mathbb{C} ~ (k = 1,2 \dots n) $.
\begin{theorem}
	Given $ p,q \in \mathbb{N}_+\cup {\left\{0\right\}}$, let $\Phi(z)=\operatorname{cir}\left[a_1, a_2, \cdots, a_{n}\right] z^p \bar{z}^q $, $\operatorname{cir}\left[a_1, a_2, \cdots, a_{n}\right] \neq \mathbb{O}$, $a_k \in \mathbb{C} ~ (k = 1,2 \dots n) $. If
	$$
	\sum_{k=1}^{n}\left|a_k\right|^2 \geq (n-1) \left| \sum_{k_1, k_2=1, k_1 \neq k_2}^{n} \bar{a}_{k_1} a_{k_2}\right|,
	$$
	then $T_{\Phi}$ is hyponormal if and only if $p \geq q$.
	
\end{theorem}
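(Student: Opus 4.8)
The plan is to follow the template of Theorem 3.1. First I would invoke Lemma 3.1 to reduce hyponormality of $T_{\Phi}$ to the single inequality $\langle (T_{\Phi}^* T_{\Phi} - T_{\Phi} T_{\Phi}^*)K_0(z), K_0(z)\rangle \ge 0$, which for $p \ge q$ is equivalent to $\|P(A\bar z^q K_p(z))\|^2 \ge \|P(A^*\bar z^p K_q(z))\|^2$, where $A = \operatorname{cir}[a_1,\dots,a_n]$. The decisive structural remark is that $A^*$ is again a circulant, whose coefficients are the $\bar a_k$ in permuted order; consequently the two scalars that Lemma 4.1 feeds into each norm, namely $\sum_{k}|a_k|^2$ and the off-diagonal sum $S := \sum_{k_1 \ne k_2}\bar a_{k_1}a_{k_2}$, are left unchanged on passing from $A$ to $A^*$. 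Indeed $S = |\sum_k a_k|^2 - \sum_k|a_k|^2$ is real, and replacing each $a_k$ by its conjugate leaves both $\sum_k|a_k|^2$ and $|\sum_k a_k|^2$ fixed.

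With this in hand I would expand both norms by Lemma 4.1(ii). Because the $A$-dependent data agree for $A$ and $A^*$, both expansions carry the \emph{same} per-index bracket $M_i := \sum_k|a_k|^2 \sum_l |c_i^{(l)}|^2 + S \sum_{l_1 \ne l_2}\bar c_i^{(l_1)} c_i^{(l_2)}$, so the difference of norms collapses to $\sum_{i=0}^\infty \Lambda_\alpha(p+i,q)M_i - \sum_{i=p-q}^\infty \Lambda_\alpha(q+i,p)M_i$. By Lemma 2.2(i) we have $\Lambda_\alpha(p+i,q) \ge \Lambda_\alpha(q+i,p)$ for every $i \ge p-q$, while the indices $0 \le i < p-q$ contribute only the weights $\Lambda_\alpha(p+i,q) \ge 0$ to the first sum; hence the whole difference is $\ge 0$ as soon as $M_i \ge 0$ for every $i$. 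This reduces the theorem to a purely algebraic inequality for $M_i$.

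The heart of the argument, and the step I expect to be the main obstacle, is therefore establishing $M_i \ge 0$ for all admissible coefficient vectors $v = (c_i^{(1)},\dots,c_i^{(n)})$. Writing $P = \sum_l |c_i^{(l)}|^2$ and $Q = |\sum_l c_i^{(l)}|^2$, the identity $\sum_{l_1 \ne l_2}\bar c_i^{(l_1)} c_i^{(l_2)} = Q - P$ turns $M_i$ into $(\sum_k|a_k|^2 - S)P + SQ$, a quantity to be minimized over the range $0 \le Q \le nP$ (Cauchy--Schwarz, with both endpoints attained). If $S \ge 0$ the minimum is at $Q = 0$ and requires $\sum_k|a_k|^2 \ge S = |S|$; if $S < 0$ the minimum is at $Q = nP$ and requires $\sum_k|a_k|^2 \ge (n-1)|S|$. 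The hypothesis $\sum_k|a_k|^2 \ge (n-1)|S|$ covers both cases (and is in fact sharp, which is where the factor $n-1$ originates), so $M_i \ge 0$ and $T_{\Phi}$ is hyponormal whenever $p \ge q$.

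For the converse direction $p < q$, I would apply the case just proved to $\Phi^* = A^* z^q \bar z^p$: since $A^*$ is a circulant satisfying the same hypothesis, $T_{\Phi^*}$ is hyponormal, and its self-commutator is not identically zero because the indices $0 \le i < q-p$ contribute strictly positive terms whenever $A \ne \mathbb{O}$. As $T_{\Phi}^* T_{\Phi} - T_{\Phi} T_{\Phi}^* = -(T_{\Phi^*}^* T_{\Phi^*} - T_{\Phi^*} T_{\Phi^*}^*)$, the self-commutator of $T_{\Phi}$ is nonzero and $\le 0$, so $T_{\Phi}$ fails to be hyponormal, completing the equivalence.
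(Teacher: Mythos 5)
Your proposal is correct and follows essentially the same route as the paper: reduce to the quadratic form on $K_0(z)$ via Lemma 4.1, observe that $A$ and $A^*$ produce the same bracket $M_i$, prove $M_i\ge 0$ from the hypothesis together with the Cauchy--Schwarz bound $0\le|\sum_l c_i^{(l)}|^2\le n\sum_l|c_i^{(l)}|^2$ (the paper phrases this as $\sum_l|c^{(l)}|^2\ge\frac{1}{n-1}|\sum_{l_1\ne l_2}\bar c^{(l_1)}c^{(l_2)}|$), and then invoke Lemma 2.2(i); the $p<q$ case is handled by passing to $\Phi^*$ exactly as in the paper. Your treatment is in fact slightly more careful than the paper's on the converse, since you justify that the self-commutator is not identically zero before concluding non-hyponormality.
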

\begin{proof}
	If $p \geq q$, by Lemma 4.1 then $T_{\Phi}$ is hyponormal, which is equivalent to
	\begin{equation}
		\begin{aligned}
			&\sum_{i=0}^{\infty} \Lambda_\alpha(p+i,q) \left[\sum_{k=1}^{n}\left|a_k\right|^2 \sum_{l=1}^n\left|c_{i}^{(l)}\right|^2+\sum_{k_1, k_2=1, k_1 \neq k_2}^{n} \bar{a}_{k_1} a_{k_2} \sum_{l_1, l_2=1, l_1 \neq l_2}^n \bar{c}_{i}^{\left(l_1\right)} c_{i}^{\left(l_2\right)}\right] \\
			& -\sum_{i=p-q}^{\infty} \Lambda_\alpha(q+i,p) \left[\sum_{k=1}^{n}\left|a_k\right|^2 \sum_{l=1}^n\left|c_{i}^{(l)}\right|^2+\sum_{k_1, k_2=1, k_1 \neq k_2}^{n} \bar{a}_{k_1} a_{k_2} \sum_{l_1, l_2=1, l_1 \neq l_2}^n \bar{c}_{i}^{\left(l_1\right)} c_{i}^{\left(l_2\right)}\right] \geq 0 .
		\end{aligned} 
	\end{equation}
	Since
	\begin{equation}
		\sum_{l=1}^n\left|c^{(l)}\right|^2 \geq \frac{1}{n-1}\left| \sum_{l_1, l_2=1, l_1 \neq l_2}^n\bar{c}^{\left(l_1\right)} c^{\left(l_2\right)}\right|,
	\end{equation}
	then
	$$
	\sum_{k=1}^{n}\left|a_k\right|^2 \sum_{l=1}^n\left|c_{i}^{(l)}\right|^2+\sum_{k_1, k_2=1, k_1 \neq k_2}^{n} \bar{a}_{k_1} a_{k_2} \sum_{l_1, l_2=1, l_1 \neq l_2}^n \bar{c}_{i}^{\left(l_1\right)} c_{i}^{\left(l_2\right)} \geq 0.
	$$
	Applying Lemma 2.2 (i), then (17) holds, hence $T_{\Phi}$ is hyponormal. 
	
	Similarly, if $p<q$, then $T_{\Phi^*}$ is hyponormal, hence $T_{\Phi}$ is never hyponormal. 
\end{proof}	

\begin{corollary}
	Given $ p,q \in \mathbb{N}_+\cup {\left\{0\right\}}$, $p>q$, let $\Phi(z)=\operatorname{cir} [a_1, a_2, \cdots, a_{n}] z^p \bar{z}^q + \operatorname{cir}[b_1, b_2, \cdots, b_{n}] z^q \bar{z}^p$, $a_k, b_k \in \mathbb{C} ~ (k = 1,2 \dots n) $. Then
	
	(i) If
	$$
	\sum_{k_1, k_2=1, k_1 \neq k_2}^{n} (\bar{a}_{k_1} a_{k_2}-\bar{b}_{k_1} b_{k_2}) \leq 0,
	$$
	then $T_{\Phi}$ is hyponormal if and only if 
	\begin{equation}
		\sum_{k=1}^{n}(\left|a_k\right|^2-\left|b_k\right|^2) + (n-1)  \sum_{k_1, k_2=1, k_1 \neq k_2}^{n} (\bar{a}_{k_1} a_{k_2}-\bar{b}_{k_1} b_{k_2}) \geq 0.
	\end{equation}
	
	(ii) If
	$$
	\sum_{k_1, k_2=1, k_1 \neq k_2}^{n} (\bar{a}_{k_1} a_{k_2}-\bar{b}_{k_1} b_{k_2}) > 0,
	$$
	then $T_{\Phi}$ is hyponormal if and only if 
	\begin{equation}
		\sum_{k=1}^{n}(\left|a_k\right|^2-\left|b_k\right|^2) - \sum_{k_1, k_2=1, k_1 \neq k_2}^{n} (\bar{a}_{k_1} a_{k_2}-\bar{b}_{k_1} b_{k_2}) \geq 0.
	\end{equation}
\end{corollary}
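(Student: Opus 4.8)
The plan is to proceed exactly as in the proofs of Theorems 3.1--3.3 and 4.1: using Lemma 4.1, reduce the hyponormality of $T_\Phi$ to the requirement that
\[
\left\| P\left(A\bar z^q K_p(z)\right) + P\left(B\bar z^p K_q(z)\right)\right\|^2 - \left\| P\left(A^*\bar z^p K_q(z)\right) + P\left(B^*\bar z^q K_p(z)\right)\right\|^2 \geq 0
\]
for every choice of coefficient vectors, where $A=\operatorname{cir}[a_1,\dots,a_n]$ and $B=\operatorname{cir}[b_1,\dots,b_n]$. Expanding each squared norm splits it into a diagonal norm, evaluated by Lemma 4.1, plus a cross term $2\operatorname{Re}\langle P(A\bar z^q K_p), P(B\bar z^p K_q)\rangle$ on the left and $2\operatorname{Re}\langle P(A^*\bar z^p K_q), P(B^*\bar z^q K_p)\rangle$ on the right.

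The crucial step will be to show that these two cross terms cancel. Matching the powers of $z$ surviving the projections forces both inner products to pair the coefficient vector $\vec c_i=(c_i^{(1)},\dots,c_i^{(n)})^T$ against $\vec c_j$ with $j=i+2(p-q)$, weighted by the same positive scalar. Since every circulant commutes with every other circulant, $A^*B=BA^*$, which yields the identity $\langle A^*\vec c_j, B^*\vec c_i\rangle_{\mathbb{C}^n}=\overline{\langle A\vec c_i, B\vec c_j\rangle_{\mathbb{C}^n}}$; because $\operatorname{Re}$ is invariant under conjugation, the two cross sums are identical and drop out of the difference. This commutativity is exactly the feature of circulant symbols that makes a clean iff characterization available, and verifying the cancellation is where I expect the real work to lie.

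After the cancellation, Lemma 4.1 collapses the remaining diagonal terms to
\[
\sum_{i=0}^\infty \kappa_i\left(U X_i + V Y_i\right)\geq 0,\qquad U=\sum_{k=1}^n\left(|a_k|^2-|b_k|^2\right),\quad V=\sum_{k_1\neq k_2}\left(\bar a_{k_1}a_{k_2}-\bar b_{k_1}b_{k_2}\right),
\]
where $X_i=\sum_{l=1}^n|c_i^{(l)}|^2$ and $Y_i=\sum_{l_1\neq l_2}\bar c_i^{(l_1)}c_i^{(l_2)}$ are both real, $\kappa_i=\Lambda_\alpha(p+i,q)$ for $0\leq i<p-q$, and $\kappa_i=\Lambda_\alpha(p+i,q)-\Lambda_\alpha(q+i,p)$ for $i\geq p-q$. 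By Lemma 2.2 (i) every $\kappa_i\geq 0$, with $\kappa_0>0$. As the summands are decoupled across $i$, the displayed inequality holds for all admissible coefficients if and only if $UX+VY\geq 0$ for every admissible pair $(X,Y)$.

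Finally I would pin down the admissible region. A direct computation gives $Y=\left|\sum_{l}c^{(l)}\right|^2-X$, so $(X,Y)$ ranges precisely over $X\geq 0$, $-X\leq Y\leq(n-1)X$, the endpoints being attained when $\sum_l c^{(l)}=0$ and when all $c^{(l)}$ are equal, respectively. Minimizing the linear form $UX+VY$ over this region splits on the sign of $V$: when $V\leq 0$ the minimum is at $Y=(n-1)X$, forcing $U+(n-1)V\geq 0$, which is (19); when $V>0$ the minimum is at $Y=-X$, forcing $U-V\geq 0$, which is (20). Necessity follows by testing a single index (e.g.\ $i=0$, where $\kappa_0>0$) with these extremal vectors, and sufficiency follows because each $\kappa_i\geq 0$ makes every summand nonnegative once $UX+VY\geq 0$ holds.
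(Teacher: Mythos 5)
Your proposal is correct and follows essentially the same route as the paper: reduce via Lemma 4.1 to $\sum_i \kappa_i\left(UX_i+VY_i\right)\geq 0$ with $\kappa_i\geq 0$ by Lemma 2.2 (i), isolate the $i=0$ term for necessity, and minimize the linear form $UX+VY$ over the attainable pairs, whose extreme rays $Y=(n-1)X$ (all $c^{(l)}$ equal) and $Y=-X$ ($\sum_l c^{(l)}=0$) yield (19) and (20) according to the sign of $V$ --- exactly the paper's use of (18), (22), (23). The one place you go beyond the paper is the explicit cancellation of the cross terms via $A^*B=BA^*$ for circulants; the paper's display (21) simply writes the commutator without those terms, so your verification correctly fills in a step the paper leaves implicit.
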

\begin{proof}
	By Lemma 4.1, then $T_{\Phi}$ is hyponormal, which is equivalent to
	\begin{equation}
		\begin{aligned}
			& \sum_{i=0}^{\infty} \Lambda_\alpha(p+i,q) \left[\sum_{k=1}^{n} \left( \left|a_k\right|^2 - \left|b_k\right|^2 \right) \sum_{l=1}^n\left|c_{i}^{(l)}\right|^2 \right. \\
			&\quad\left. +\sum_{k_1, k_2=1, k_1 \neq k_2}^{n} \left( \bar{a}_{k_1} a_{k_2} - \bar{b}_{k_1} b_{k_2} \right) \sum_{l_1, l_2=1, l_1 \neq l_2}^n \bar{c}_{i}^{\left(l_1\right)} c_{i}^{\left(l_2\right)}\right]  \\
			& -\sum_{i=p-q}^{\infty} \Lambda_\alpha(q+i,p) \left[\sum_{k=1}^{n} \left( \left|a_k\right|^2 - \left|b_k\right|^2 \right) \sum_{l=1}^n\left|c_{i}^{(l)}\right|^2 \right.\\
			&\quad\left.+\sum_{k_1, k_2=1, k_1 \neq k_2}^{n} \left( \bar{a}_{k_1} a_{k_2} - \bar{b}_{k_1} b_{k_2} \right) \sum_{l_1, l_2=1, l_1 \neq l_2}^n \bar{c}_{i}^{\left(l_1\right)} c_{i}^{\left(l_2\right)}\right] \geq 0 .
		\end{aligned}
	\end{equation}
	If $T_{\Phi}$ is hyponormal, put $ c_{0}^{(1)} = 1$, $ c_{0}^{(l_1)} = 0$ for $ l_1 =2, \dots , n$ and $ c_{i}^{(l_2)} = 0$ $(i \geq 1)$ for $ l_2 = 1,2, \dots , n$ into (21) then
	\begin{equation}
		\sum_{k=1}^{n} \left( \left|a_k\right|^2 - \left|b_k\right|^2 \right) \geq 0.
	\end{equation}
	By Lemma 2.2 (i), it follows that $T_{\Phi}$ is hyponormal if for all $i = 0,1,2, \dots$.
	\begin{equation}
		\begin{aligned}
			\sum_{k=1}^{n} \left( \left|a_k\right|^2 - \left|b_k\right|^2 \right) \sum_{l=1}^n\left|c_{i}^{(l)}\right|^2+\sum_{k_1, k_2=1, k_1 \neq k_2}^{n} \left( \bar{a}_{k_1} a_{k_2} - \bar{b}_{k_1} b_{k_2} \right) \sum_{l_1, l_2=1, l_1 \neq l_2}^n \bar{c}_{i}^{\left(l_1\right)} c_{i}^{\left(l_2\right)} \geq 0.
		\end{aligned}
	\end{equation}
	If
	$$
	\sum_{k_1, k_2=1, k_1 \neq k_2}^{n} (\bar{a}_{k_1} a_{k_2}-\bar{b}_{k_1} b_{k_2}) \leq 0,
	$$
	it follows (18), (22), (23) that $T_{\Phi}$ is hyponormal if (19) holds. If $T_{\Phi}$ is hyponormal, put $ c_{0}^{(l)} = 1$ and $ c_{i}^{(l)} = 0$ $(i \geq 1)$ for $ l = 1,2, \dots , n$ into (21), then (19) holds, which proves (i). If
	$$
	\sum_{k_1, k_2=1, k_1 \neq k_2}^{n} (\bar{a}_{k_1} a_{k_2}-\bar{b}_{k_1} b_{k_2}) > 0,
	$$
	from (22), (23), $T_{\Phi}$ is hyponormal if for all $i = 0,1,2 , \dots$.
	$$
	\frac{\displaystyle \sum_{k=1}^{n}(\left|a_k\right|^2-\left|b_k\right|^2)}{ \displaystyle \sum_{k_1, k_2=1, k_1 \neq k_2}^{n} (\bar{a}_{k_1} a_{k_2}-\bar{b}_{k_1} b_{k_2})}  \geq  - \frac{ \displaystyle \sum_{l_1, l_2=1, l_1 \neq l_2}^n \bar{c}_{i}^{\left(l_1\right)} c_{i}^{\left(l_2\right)}}{ \displaystyle \sum_{l=1}^n\left|c_{i}^{(l)}\right|^2} .
	$$
	Since 
	$$
	-\frac{ \displaystyle \sum_{l_1, l_2=1, l_1 \neq l_2}^n \bar{c}_{i}^{\left(l_1\right)} c_{i}^{\left(l_2\right)}}{ \displaystyle \sum_{l=1}^n\left|c_{i}^{(l)}\right|^2} = \frac{ \displaystyle \sum_{l=1}^n\left|c_{i}^{(l)}\right|^2 - \sum_{l=1}^n c_{i}^{(l)} \sum_{l=1}^n \bar{c}_{i}^{(l)}}{ \displaystyle \sum_{l=1}^n\left|c_{i}^{(l)}\right|^2} = 1 - \frac{ \displaystyle \sum_{l=1}^n c_{i}^{(l)} \sum_{l=1}^n \bar{c}_{i}^{(l)}}{ \displaystyle \sum_{l=1}^n\left|c_{i}^{(l)}\right|^2} \leq 1.
	$$
	The above inequality takes the equal sign if $ \sum_{l=1}^n c_{i}^{(l)} = 0$ ($c_{i}^{(l)}$'s are not all 0's), then $T_{\Phi}$ is hyponormal if (20) holds. If $T_{\Phi}$ is hyponormal, put $ c_{i}^{(l)} = 0$ $ (i \geq 1)$ for $ l = 1,2, \dots , n$ into (21) and select not all 0's $ c_{0}^{(l)}$ for $ l = 1,2, \dots , n$ satisfying $\sum_{l=1}^n c_{l}^{(l)} = 0$ into (21), then (20) holds, which proves (ii).
\end{proof}

Next, given $ p,q,s,t \in \mathbb{N}_+\cup {\left\{0\right\}}$, we consider the case when $\Phi = \operatorname{cir}\left[a_1, a_2, \cdots, a_{n}\right] z^p \bar{z}^q + \operatorname{cir}\left[b_1, b_2, \cdots, b_{n}\right] z^s \bar{z}^t$, $a_k, b_k \in \mathbb{C} ~ (k = 1,2 \dots n) $.
\begin{theorem}
	Given $ p,q,s,t \in \mathbb{N}_+\cup {\left\{0\right\}}$ $p, q, s, t \in \mathbb{N}_+\cup {\left\{0\right\}}$, $p-q=s-t > 0$, let $\Phi(z)=\operatorname{cir}\left[a_1, a_2, \cdots, a_{n}\right] z^p \bar{z}^q + \operatorname{cir}\left[b_1, b_2, \cdots, b_{n}\right] z^s \bar{z}^t$, $a_k, b_k \in \mathbb{C} ~ (k = 1,2 \dots n) $. If
	$$
	\sum_{k=1}^{n}\left|a_k\right|^2 \geq (n-1) \left| \sum_{k_1, k_2=1, k_1 \neq k_2}^{n}\bar{a}_{k_1} a_{k_2}\right|,
	$$
	and
	$$
	\sum_{k=1}^{n}\left|b_k\right|^2 \geq (n-1) \left| \sum_{k_1, k_2=1, k_1 \neq k_2}^{n}\bar{b}_{k_1} b_{k_2}\right|,
	$$
	and
	$$
	\sum_{k=1}^{n} \operatorname{Re} (\bar{a}_k b_k) \geq (n-1) \left| \sum_{k_1, k_2=1, k_1 \neq k_2}^{n}  \operatorname{Re} ({b}_{k_1} \bar{a}_{k_2}) \right|,
	$$
	then $T_{\Phi}$ is hyponormal.
\end{theorem}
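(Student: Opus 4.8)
The plan is to follow the same three-block scheme as in Theorem 3.2 and Corollary 4.1, replacing the entrywise quadratic forms by the circulant ones furnished by Lemma 4.1. Writing $A=\operatorname{cir}[a_1,\dots,a_n]$ and $B=\operatorname{cir}[b_1,\dots,b_n]$, I would first reduce the hyponormality of $T_\Phi$ (as in the proof of Theorem 3.1) to the nonnegativity of the self-commutator evaluated against a generic $K_0(z)=\sum_{i\ge 0}z^{i}(c_i^{(1)},\dots,c_i^{(n)})^{T}$, using that $T_\Phi K_0=P(A\bar z^q K_p)+P(B\bar z^t K_s)$ and $T_\Phi^*K_0=P(A^*\bar z^p K_q)+P(B^*\bar z^s K_t)$. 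Expanding the two squared norms by Lemma 4.1 and abbreviating
$$Q_A(c_i)=\sum_{k=1}^n|a_k|^2\sum_{l=1}^n\bigl|c_i^{(l)}\bigr|^2+\Bigl(\sum_{k_1\neq k_2}\bar a_{k_1}a_{k_2}\Bigr)\sum_{l_1\neq l_2}\bar c_i^{(l_1)}c_i^{(l_2)},$$
with $Q_B$ defined analogously, the target inequality (recall $p-q=s-t>0$) splits into an $A$-block $\sum_{i\ge0}\Lambda_\alpha(p+i,q)Q_A(c_i)-\sum_{i\ge p-q}\Lambda_\alpha(q+i,p)Q_A(c_i)$, a $B$-block of the same shape with $(s,t)$ in place of $(p,q)$, and a cross block $2\sum_{i\ge0}\frac{\Lambda_\alpha(p+i)\Lambda_\alpha(s+i)}{\Lambda_\alpha(p-q+i)}C(c_i)-2\sum_{i\ge p-q}\frac{\Lambda_\alpha(q+i)\Lambda_\alpha(t+i)}{\Lambda_\alpha(q-p+i)}C(c_i)$, where $C(c_i)$ is the circulant cross form carrying $\sum_k\operatorname{Re}(\bar a_kb_k)$ on the diagonal and $\sum_{k_1\neq k_2}\operatorname{Re}(b_{k_1}\bar a_{k_2})$ off the diagonal. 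I would then prove that each of the three blocks is nonnegative.

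For the two diagonal blocks I would first establish the pointwise estimates $Q_A(c_i)\ge0$ and $Q_B(c_i)\ge0$. This is where the first two hypotheses enter, combined with the elementary inequality (18), namely $\sum_l|c^{(l)}|^2\ge\frac1{n-1}\bigl|\sum_{l_1\neq l_2}\bar c^{(l_1)}c^{(l_2)}\bigr|$: multiplying the hypothesis $\sum_k|a_k|^2\ge(n-1)\bigl|\sum_{k_1\neq k_2}\bar a_{k_1}a_{k_2}\bigr|$ by (18) gives $\sum_k|a_k|^2\sum_l|c_i^{(l)}|^2\ge\bigl|\sum_{k_1\neq k_2}\bar a_{k_1}a_{k_2}\bigr|\bigl|\sum_{l_1\neq l_2}\bar c_i^{(l_1)}c_i^{(l_2)}\bigr|$, which dominates the (real) off-diagonal term of $Q_A(c_i)$, so $Q_A(c_i)\ge0$; the same argument with the second hypothesis gives $Q_B(c_i)\ge0$. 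Once pointwise nonnegativity is in hand, the $\Lambda$-bookkeeping is routine: for $0\le i<p-q$ only the first sum of each block is present and is nonnegative, while for $i\ge p-q$ Lemma 2.2(i) gives $\Lambda_\alpha(p+i,q)\ge\Lambda_\alpha(q+i,p)$ (and likewise for $(s,t)$), so every surviving coefficient is nonnegative. Hence the $A$-block and the $B$-block are each $\ge0$.

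The cross block is the crux, and it is where the third hypothesis and Lemma 2.2(ii) do the real work. The first step is again pointwise: using $\sum_k\operatorname{Re}(\bar a_kb_k)\ge(n-1)\bigl|\sum_{k_1\neq k_2}\operatorname{Re}(b_{k_1}\bar a_{k_2})\bigr|$ together with (18) exactly as above yields $C(c_i)\ge0$ for every $i$. The second step is to pair the two cross series correctly: for $0\le i<p-q$ only the weight $\frac{\Lambda_\alpha(p+i)\Lambda_\alpha(s+i)}{\Lambda_\alpha(p-q+i)}$ occurs and it is positive, while for $i\ge p-q$ Lemma 2.2(ii) guarantees $\frac{\Lambda_\alpha(p+i)\Lambda_\alpha(s+i)}{\Lambda_\alpha(p-q+i)}\ge\frac{\Lambda_\alpha(q+i)\Lambda_\alpha(t+i)}{\Lambda_\alpha(q-p+i)}$, so the difference of the two weights is nonnegative and multiplies the nonnegative $C(c_i)$. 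Summing the three nonnegative blocks gives the desired inequality, whence $T_\Phi$ is hyponormal. The main obstacle is precisely this cross block: one must isolate the circulant cross form $C(c_i)$ with diagonal and off-diagonal coefficients matching the third hypothesis, check that both cross series carry the identical form $C$ so that the weight comparison of Lemma 2.2(ii) applies to their difference, and verify that the off-diagonal $c$-sum is real so that the product estimate with (18) goes through just as in the diagonal blocks.
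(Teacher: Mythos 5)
Your proposal is correct and follows essentially the same route as the paper: the paper likewise uses inequality (18) together with the three hypotheses to obtain the pointwise nonnegativity of the $A$-form, the $B$-form, and the real part of the circulant cross form (its displays (24)--(26)), and then concludes from the expansion furnished by Lemma 4.1 together with Lemma 2.2 (i) and (ii). Your write-up merely makes explicit the $\Lambda$-coefficient bookkeeping and the reality of the off-diagonal $c$-sum, which the paper leaves implicit.
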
 
\begin{proof}
	By (18) then
	\begin{equation}
		\sum_{k=1}^{n}\left|a_k\right|^2 \sum_{l=1}^n\left|c_{i}^{(l)}\right|^2+\sum_{k_1, k_2=1, k_1 \neq k_2}^{n} \bar{a}_{k_1} a_{k_2} \sum_{l_1, l_2=1, l_1 \neq l_2}^n \bar{c}_{i}^{\left(l_1\right)} c_{i}^{\left(l_2\right)} \geq 0,
	\end{equation}
	and
	\begin{equation}
		\sum_{k=1}^{n}\left|b_k\right|^2 \sum_{l=1}^n\left|c_{i}^{(l)}\right|^2+\sum_{k_1, k_2=1, k_1 \neq k_2}^{n} \bar{b}_{k_1} b_{k_2} \sum_{l_1, l_2=1, l_1 \neq l_2}^n \bar{c}_{i}^{\left(l_1\right)} c_{i}^{\left(l_2\right)} \geq 0,
	\end{equation}
	and
	\begin{equation}
		\begin{aligned}
			&\operatorname{Re} \left( \sum_{k=1}^{n} \bar{a}_k b_k \sum_{l=1}^n\left|c_{i}^{(l)}\right|^2+\sum_{k_1, k_2=1, k_1 \neq k_2}^{n} {b}_{k_1} \bar{a}_{k_2} \sum_{l_1, l_2=1, l_1 \neq l_2}^n \bar{c}_{i}^{\left(l_1\right)} c_{i}^{\left(l_2\right)} \right) \\
			=&\sum_{k=1}^{n} \operatorname{Re} (\bar{a}_k b_k) \sum_{l=1}^n\left|c_{i}^{(l)}\right|^2+\sum_{k_1, k_2=1, k_1 \neq k_2}^{n}  \operatorname{Re} ({b}_{k_1} \bar{a}_{k_2}) \sum_{l_1, l_2=1, l_1 \neq l_2}^n \bar{c}_{i}^{\left(l_1\right)} c_{i}^{\left(l_2\right)} \geq 0.
		\end{aligned}
	\end{equation}
	By Lemma 4.1, then $T_{\Phi}$ is hyponormal, which is equivalent to
	\begin{align}
		& =\sum_{i=0}^{\infty} \Lambda_\alpha(p+i,q) \left[\sum_{k=1}^{n}\left|a_k\right|^2 \sum_{l=1}^n\left|c_{i}^{(l)}\right|^2+\sum_{k_1, k_2=1, k_1 \neq k_2}^{n} \bar{a}_{k_1} a_{k_2} \sum_{l_1, l_2=1, l_1 \neq l_2}^n \bar{c}_{i}^{\left(l_1\right)} c_{i}^{\left(l_2\right)}\right] \nonumber\\
		&\quad -\sum_{i=p-q}^{\infty} \Lambda_\alpha(q+i,p) \left[\sum_{k=1}^{n}\left|a_k\right|^2 \sum_{l=1}^n\left|c_{i}^{(l)}\right|^2+\sum_{k_1, k_2=1, k_1 \neq k_2}^{n} \bar{a}_{k_1} a_{k_2} \sum_{l_1, l_2=1, l_1 \neq l_2}^n \bar{c}_{i}^{\left(l_1\right)} c_{i}^{\left(l_2\right)}\right] \nonumber\\
		&\quad+\sum_{i=0}^{\infty} \Lambda_\alpha(s+i,t) \left[\sum_{k=1}^{n}\left|b_k\right|^2 \sum_{l=1}^n\left|c_{i}^{(l)}\right|^2+\sum_{k_1, k_2=1, k_1 \neq k_2}^{n} \bar{b}_{k_1} b_{k_2} \sum_{l_1, l_2=1, l_1 \neq l_2}^n \bar{c}_{i}^{\left(l_1\right)} c_{i}^{\left(l_2\right)}\right] \nonumber\\
		& \quad-\sum_{i=t-s}^{\infty} \Lambda_\alpha(t+i,s) \left[\sum_{k=1}^{n}\left|b_k\right|^2 \sum_{l=1}^n\left|c_{i}^{(l)}\right|^2+\sum_{k_1, k_2=1, k_1 \neq k_2}^{n} \bar{b}_{k_1} b_{k_2} \sum_{l_1, l_2=1, l_1 \neq l_2}^n \bar{c}_{i}^{\left(l_1\right)} c_{i}^{\left(l_2\right)}\right] \nonumber\\
		&\quad+2\operatorname{Re}\left\{ \sum_{i=0}^{\infty} \frac{\Lambda_\alpha(p+i) \Lambda_\alpha(s+i)}{\Lambda_\alpha(p-q+i)} \right. \nonumber\\
		&\left.\quad\quad\times \left[\sum_{k=1}^{n} a_k \bar{b}_k \sum_{l=1}^n\left|c_{i}^{(l)}\right|^2+\sum_{k_1, k_2=1, k_1 \neq k_2}^{n} \bar{b}_{k_1} a_{k_2} \sum_{l_1, l_2=1, l_1 \neq l_2}^n \bar{c}_{i}^{\left(l_1\right)} c_{i}^{\left(l_2\right)}\right] \right\} \nonumber\\
		&\quad-2 \operatorname{Re} \left\{\sum_{i=p-q}^{\infty} \frac{\Lambda_\alpha(q+i) \Lambda_\alpha(t+i)}{\Lambda_\alpha(q-p+i)} \right. \nonumber\\
		&\left.\quad\quad\times \left[\sum_{k=1}^{n} \bar{a}_k b_k \sum_{l=1}^n\left|c_{i}^{(l)}\right|^2+\sum_{k_1, k_2=1, k_1 \neq k_2}^{n} {b}_{k_1} \bar{a}_{k_2} \sum_{l_1, l_2=1, l_1 \neq l_2}^n \bar{c}_{i}^{\left(l_1\right)} c_{i}^{\left(l_2\right)}\right] \right\} \geq 0.
	\end{align}
	Combined with (24), (25), (26) and Lemma 2.2 (i) (ii), it follows that $T_{\Phi}$ is hyponormal. 
\end{proof}	
\begin{corollary}
	Given $p,s \in \mathbb{N}_+\cup {\left\{0\right\}}$, if $\Phi(z)=\operatorname{cir}\left[a_1, a_2, \cdots, a_{n}\right] \left| z \right|^{2p}+ \operatorname{cir}\left[b_1, b_2, \cdots, b_{n}\right] \left| z \right|^{2s}$, $a_k, b_k \in \mathbb{C} ~ (k = 1,2 \dots n) $, then $T_{\Phi}$ is normal hence hyponormal.
\end{corollary}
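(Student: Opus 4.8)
The plan is to mirror the matrix computation for $\Phi=A|z|^{2p}+B|z|^{2s}$ carried out just above: feed the degenerate choice $q=p$, $t=s$ into the self-commutator identity~(27) obtained in the proof of Theorem~4.2, and check that every surviving term cancels, so that the self-commutator form vanishes identically and $T_\Phi$ is in fact \emph{normal} (hence hyponormal). By the same reduction to the generic element $K_0$ used there, it suffices to evaluate $\langle(T_\Phi^*T_\Phi-T_\Phi T_\Phi^*)K_0(z),K_0(z)\rangle=\|T_\Phi K_0\|^2-\|T_\Phi^*K_0\|^2$, and Lemma~4.1 rewrites this as the left-hand side of~(27); showing it equals $0$ for every $K_0$ forces the self-adjoint self-commutator to vanish by polarization. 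Although~(27) was derived under $p-q=s-t>0$, it is an algebraic identity valid whenever $p-q=s-t$, and the choice $q=p$, $t=s$ gives $p-q=s-t=0$, so~(27) still applies; only the monotonicity estimates of Lemma~2.2, not needed here, are lost.

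First I would record the collapses of the weights. The added $A$-weight $\Lambda_\alpha(p+i,q)$ and the subtracted $A$-weight $\Lambda_\alpha(q+i,p)$ both become $\Lambda_\alpha(p+i,p)$, with both sums now over $i\geq0$; similarly both $B$-weights become $\Lambda_\alpha(s+i,s)$ over $i\geq0$, and the two cross-term factors $\frac{\Lambda_\alpha(p+i)\Lambda_\alpha(s+i)}{\Lambda_\alpha(p-q+i)}$ and $\frac{\Lambda_\alpha(q+i)\Lambda_\alpha(t+i)}{\Lambda_\alpha(q-p+i)}$ both collapse to $\frac{\Lambda_\alpha(p+i)\Lambda_\alpha(s+i)}{\Lambda_\alpha(i)}$ over the same range.

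Next I would cancel the pure $A$- and $B$-contributions. The bracketed circulant form attached to the added $A$-term involves the coefficients only through $\sum_k|a_k|^2$ and $\sum_{k_1\neq k_2}\bar a_{k_1}a_{k_2}=\big|\sum_k a_k\big|^2-\sum_k|a_k|^2$. Both are unchanged when $A$ is replaced by its conjugate transpose $A^*$: the generating entries of the circulant $A^*$ are a reordering of $\bar a_1,\dots,\bar a_n$, so $\sum_k|a_k|^2$ is preserved and $\sum_k a_k$ only passes to its conjugate, leaving $\big|\sum_k a_k\big|^2$ unchanged (this is just the normality $\|Av\|=\|A^*v\|$ of a circulant). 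Since the $A$-weights now agree, the added and subtracted $A$-terms are equal and cancel; the identical argument disposes of the two $B$-terms.

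The substantive step, and the only place I expect real care to be needed, is the pair of cross sums. After the substitution they carry the common weight $\frac{\Lambda_\alpha(p+i)\Lambda_\alpha(s+i)}{\Lambda_\alpha(i)}$ over $i\geq0$, so their total is
\begin{equation*}
2\operatorname{Re}\sum_{i=0}^{\infty}\frac{\Lambda_\alpha(p+i)\Lambda_\alpha(s+i)}{\Lambda_\alpha(i)}\left[\Big(\sum_{k=1}^{n}(a_k\bar b_k-\bar a_k b_k)\Big)\sum_{l=1}^{n}|c_i^{(l)}|^2+\Big(\sum_{k_1\neq k_2}(\bar b_{k_1}a_{k_2}-b_{k_1}\bar a_{k_2})\Big)\sum_{l_1\neq l_2}\bar c_i^{(l_1)}c_i^{(l_2)}\right].
\end{equation*}
Each inner summand $a_k\bar b_k-\bar a_k b_k$ and $\bar b_{k_1}a_{k_2}-b_{k_1}\bar a_{k_2}$ is of the form $w-\bar w$ (since $\overline{\bar a_k b_k}=a_k\bar b_k$ and $\overline{b_{k_1}\bar a_{k_2}}=\bar b_{k_1}a_{k_2}$), hence purely imaginary, whereas $\sum_l|c_i^{(l)}|^2$ and $\sum_{l_1\neq l_2}\bar c_i^{(l_1)}c_i^{(l_2)}=\big|\sum_l c_i^{(l)}\big|^2-\sum_l|c_i^{(l)}|^2$ are real and the weight is real. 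Hence the quantity inside $\operatorname{Re}(\cdot)$ is purely imaginary for every $i$, and the whole cross contribution is $0$. Combining the three cancellations shows the form~(27) is identically $0$, so $T_\Phi^*T_\Phi-T_\Phi T_\Phi^*=0$ and $T_\Phi$ is normal, a fortiori hyponormal. The delicate point is precisely this last step: it is the outer $\operatorname{Re}(\cdot)$ acting on a purely imaginary total that annihilates the cross terms, not any term-by-term positivity.
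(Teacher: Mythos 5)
Your proposal is correct and follows the paper's own route exactly: the paper's proof is the one-line instruction to put $q=p$ and $t=s$ into (27) and observe that the quadratic form vanishes identically, so $T_\Phi$ is normal. You have simply written out the cancellations the paper leaves implicit, and your justification of the delicate step --- that the surviving cross contribution is the real part of a purely imaginary quantity, rather than a term-by-term cancellation --- is accurate.
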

\begin{proof}
	Put $q = p$ and $t=s$ into (27), then $(2) \equiv 0$, hence $T_{\Phi}$ is normal.
\end{proof}

Since the circulant is a particular instance of  an n-order complex square matrix, applying Theorem 3.3, we immediately obtain the necessary condition for $T_{\Phi}$ to be hyponormal, where $\Phi = \operatorname{cir}\left[a_1, a_2, \cdots, a_{n}\right] z^p \bar{z}^q + \operatorname{cir}\left[b_1, b_2, \cdots, b_{n}\right] z^s \bar{z}^t$, $a_k, b_k \in \mathbb{C} ~ (k = 1,2 \dots n) $. Particularly for the circulant, we have the following interesting example:
\begin{example}
	Given $ p,q,s,t \in \mathbb{N}_+\cup {\left\{0\right\}}$, $p-q=t-s > 0$, let $\Phi(z)=\operatorname{cir}\left[a_1, a_2, \cdots, a_{n}\right] z^p \bar{z}^q + \operatorname{cir}\left[b_1, b_2, \cdots, b_{n}\right] z^s \bar{z}^t$, $a_k, b_k \in \mathbb{C} ~ (k = 1,2 \dots n) $. If $T_{\Phi}$ is hyponormal then
	
	(i) If $t \geq p$, then 
	\begin{align*}
		&\quad \sum_{k=1}^{n}\left|a_k\right|^2 - \sum_{k_1, k_2=1, k_1 \neq k_2}^{n} \bar{a}_{k_1} a_{k_2} \\
		&\geq \max \left\{\frac{\Lambda_\alpha(2 t-s-1, s)}{\Lambda_\alpha(2 p-q-1, q)}, W_{\alpha}(p, q, t, s)\right\} \left[\sum_{k=1}^{n}\left|b_k\right|^2 - \sum_{k_1, k_2=1, k_1 \neq k_2}^{n} \bar{b}_{k_1} b_{k_2} \right]. 
	\end{align*}
	
	(ii) If $t<p$, then
	\begin{align*}
		&\quad \sum_{k=1}^{n}\left|a_k\right|^2 - \sum_{k_1, k_2=1, k_1 \neq k_2}^{n} \bar{a}_{k_1} a_{k_2} \\
		&\geq \max \left\{\frac{\Lambda_\alpha(t, s)}{\Lambda_\alpha(p, q)}, W_{\alpha}(p, q, t, s)\right\} \left[\sum_{k=1}^{n}\left|b_k\right|^2 - \sum_{k_1, k_2=1, k_1 \neq k_2}^{n} \bar{b}_{k_1} b_{k_2} \right]. 
	\end{align*}
	Where $W_{\alpha}(p, q, t, s)=\displaystyle \sup _{i \geq p-q} \frac{\Lambda_\alpha(t+i, s)-\Lambda_\alpha(s+i, t)}{\Lambda_\alpha(p+i, q)-\Lambda_\alpha(q+i, p)}$.
\end{example}
\begin{proof}
	Immediate from Theorem 3.3 $(r=n, c_1=c_2=\dots=c_{n-1} = 1, c_n = 1-n)$. 
\end{proof}


\hspace*{-0.6cm}\textbf{\bf Competing interests}\\
The authors declare that they have no competing interests.\\

\hspace*{-0.6cm}\textbf{\bf Funding}\\
The research was supported by Natural Science Foundation of China (Grant Nos. 12061069).\\

\hspace*{-0.6cm}\textbf{\bf Authors contributions}\\
All authors contributed equality and significantly in writing this paper. All authors read and approved the final manuscript.\\

\hspace*{-0.6cm}\textbf{\bf Acknowledgments}\\
The authors would like to express their thanks to the referees for valuable advice regarding previous version of this paper.\\

\hspace*{-0.6cm}\textbf{\bf Authors details}\\
Guangyang Fu and Jiang Zhou*, 107552300585@stu.xju.edu.cn and zhoujiang@xju.edu.cn, College of Mathematics and System Science, Xinjiang University, Urumqi, 830046, P.R China.

\bigskip
\noindent Guangyang Fu and Jiang Zhou\\
\medskip
\noindent
College of Mathematics and System Sciences\\
Xinjiang University\\
Urumqi 830046\\
\smallskip
\noindent{E-mail }:\\
\texttt{107552300585@stu.xju.edu.cn} (Guangyang Fu)\\
\texttt{zhoujiang@xju.edu.cn} (Jiang Zhou)
\bigskip \medskip

\begin{thebibliography}{99}
\frenchspacing

\bibitem{AM1976}
{\sc M.B. Abrahamse},
{\it Subnormal Toeplitz operators and functions of bounded type},
Duke Math. J. {\bf 43}, 3 (1976), 597-604.

\bibitem{AI1975}
{\sc I. Amemiya, T. Ito and T.K. Wong}, 
{\it On quasinormal Toeplitz operators}, 
Proc. Am. Math. Soc. {\bf 50}, 1 (1975), 254-258.

\bibitem{BS1950}
{\sc S. Bergman},
{\it The kernel function and conformal mapping},
American Mathematical Society, Providence, 1950.

\bibitem{CL1984}
{\sc C.C. Cowen and J.J. Long},
{\it Some subnormal Toeplitz operators},
J. Reine Angew. Math. {\bf 1984}, 315 (1984), 216-220.

\bibitem{CC1988}
{\sc C.C. Cowen},
{\it Hyponormality of Toeplitz operators},
Proc. Am. Math. Soc. {\bf 103}, 3 (1988) 809-812.

\bibitem{CR2012}
{\sc R.E. Curto, I.S. Hwang and W.Y. Lee},
{\it Hyponormality and subnormality of block Toeplitz operators},
Adv. Math. {\bf 230}, 4-6 (2012) 2094-2151.

\bibitem{FM2019}
{\sc M. Fleeman and C. Liaw},
{\it Hyponormal Toeplitz operators with non-harmonic symbol acting on the Bergman space},
Oper. Matrices {\bf 13}, 1 (2019) 61-83.

\bibitem{FF1993}
{\sc C. Foias and A. Frazho},
{\it The commutant lifting approach to interpolation problems},
Birkh$\ddot{\text{a}}$user Verlag, Basel, 1990.

\bibitem{GC2006}
{\sc C. Gu, J. Hendricks and D. Rutherford},
{\it Hyponormality of block Toeplitz operators},
Pacific J. Math. {\bf 223}, 1 (2006) 95-111.

\bibitem{HB1964}
{\sc A. Brown and P.R. Halmos},
{\it Algebraic Properties of Toeplitz operators},
Journal F$\ddot{\text{u}}$r Die Reine Und Angewandte Mathematik {\bf 1964}, 213 (1964) 89-102.

\bibitem{HA1970}
{\sc P.R. Halmos},
{\it Ten problems in Hilbert space},
Bull. Am. Math. Soc. {\bf 76}, 5 (1970) 887-933.

\bibitem{HZ2000}
{\sc H. Hedenmalm, B. Korenblum and K.H. Zhu},
{\it Theory of Bergman spaces},
Springer Verlag, New York, 2000.

\bibitem{HI1999}
{\sc I.S. Hwang, I.H. Kim and W.Y. Lee},
{\it Hyponormality of Toeplitz operators with polynomial symbols},
Math. Ann. {\bf 313}, 2 (1999) 247-261. 

\bibitem{HI2011}
{\sc I.S. Hwang, D.O. Kang and W.Y. Lee},
{\it A gap between hyponormality and subnormality for block Toeplitz operators},
J. Math. Anal. Appl. {\bf 382}, 2 (2011) 883-891.

\bibitem{KL2021}
{\sc S. Kim and J. Lee},
{\it Hyponormality of Toeplitz operators with non-harmonic symbols on the Bergman spaces},
J. Inequal. Appl. {\bf 2021}, 67 (2021) 1-13.

\bibitem{KL2023}
{\sc S. Kim and J. Lee},
{\it Hyponormal Toeplitz operators with non-harmonic symbols on the weighted Bergman spaces},
Ann. Funct. Anal. {\bf 14}, 1 (2023) 1-14.

\bibitem{LJ2019}
{\sc J. Lee},
{\it Hyponormality of Toeplitz operators on the weighted Bergman space with matrix-valued circulant symbols},
Linear Algebra Appl. {\bf 2019}, 576 (2019) 35-50.

\bibitem{BS2021}
{\sc T. Le and B. Simanek},
{\it Hyponormal Topelitz operators on weighted Bergman spaces},
Integral Transforms Spec. Funct. {\bf 32}, 5-8 (2021) 560-567.

\bibitem{FF1968}
{\sc B.Sz.-Nagy and C. Foias},
{\it Dilatations des commutnats d'op$\acute{\text{e}}$rateurs},
C. R. Acad. Sci. Paris S$\acute{\text{e}}$r. A-B {\bf 1968}, 266 (1968) A493-A495.

\bibitem{NTTK}
{\sc T. Nakazi and K. Takahashi},
{\it Hyponormal Toeplitz operators and extremal problems of Hardy spaces},
Trans. Am. Soc. {\bf 338}, 2 (1993) 753-767.

\bibitem{SD1967}
{\sc D. Sarason},
{\it Generalized interpolation in $H^\infty$},
Trans. Am. math. Soc. {\bf 1967}, 127 (1967) 179-203. 

\bibitem{TO1911}
{\sc O. Toeplitz},
{\it Zur Theorie der quadratischen und bilinearen Formen von unendlichvielen Ver$\ddot{\text{a}}$nderlichen},
Math. Ann. {\bf 1911}, 70 (1911) 351-376.

\bibitem{Zhu1995}
{\sc K.H. Zhu},
{\it Hyponormal Toeplitz operators with polynomial symbols},
Integr. Equat. Oper. Th. {\bf 21}, 3 (1995), 376-381.

\bibitem{Zhu2007}
{\sc K.H. Zhu},
{\it Operator Theory in Function Spaces, Second edition},
American Mathematical Society, Providence, 2007.
\end{thebibliography}
\end{document}